\newtheorem{theorem}{Theorem}
\theoremstyle{plain}
\newtheorem{lemma}[theorem]{Lemma}
\newtheorem{proposition}[theorem]{Proposition}
\newtheorem{corollary}[theorem]{Corollary}
\newtheorem{claim}[theorem]{Claim}
\author{Jan Goedgebeur\affiliationmark{1,2}\thanks{Both authors are supported by a Postdoctoral Fellowship of the Research Foundation Flanders (FWO).}
  \and Carol T.\ Zamfirescu\affiliationmark{1,3}%\thanks{Supported by a Postdoctoral Fellowship of the Research Foundation Flanders (FWO).}
 }
\title{On almost hypohamiltonian graphs}
\affiliation{
  % one line per affiliation, no postal codes, grant numbers or similar
  Department of Applied Mathematics, Computer Science and Statistics, Ghent University, Ghent, Belgium\\
  Computer Science Department, University of Mons, Mons, Belgium\\
  Department of Mathematics, Babe\c{s}-Bolyai University, Cluj-Napoca, Roumania
  }
\keywords{hamiltonian, hypohamiltonian, almost hypohamiltonian, planar, cubic, exhaustive generation}
\begin{document}
%\publicationdetails{21}{2019}{3}{9}{4576}
\publicationdetails{21}{2019}{4}{5}{5300}
\maketitle
\begin{abstract}
A graph $G$ is \emph{almost hypohamiltonian} (a.h.) if $G$ is non-hamiltonian, there exists a vertex $w$ in $G$ such that $G - w$ is non-hamiltonian, and $G - v$ is hamiltonian for every vertex $v \ne w$ in $G$. The second author asked in [\emph{J. Graph Theory} \textbf{79} (2015) 63--81] for all orders for which a.h.\ graphs exist. Here we solve this problem. To this end, we present a specialised algorithm which generates complete sets of a.h.\ graphs for various orders. Furthermore, we show that the smallest cubic a.h.\ graphs have order~26. We provide a lower bound for the order of the smallest planar a.h.\ graph and improve the upper bound for the order of the smallest planar a.h.\ graph containing a cubic vertex. We also determine the smallest planar a.h.\ graphs of girth~5, both in the general and cubic case. Finally, we extend a result of Steffen on snarks and improve two bounds on longest paths and longest cycles in polyhedral graphs due to Jooyandeh, McKay, {\"O}sterg{\aa}rd, Pettersson, and the second author.
\end{abstract}

%--------------------------------------------------------------------------------------------------

\section{Introduction}
\label{section:intro}

The graphs in this paper are undirected, finite, connected, and contain neither loops nor multiple edges, unless explicitly stated otherwise. A cycle through every vertex of a graph is called \emph{hamiltonian} and a graph containing such a cycle is called \emph{hamiltonian}, as well. A graph $G$ is \emph{hypohamiltonian} if $G$ itself is non-hamiltonian, but for every vertex $v$ in $G$, the graph $G - v$ is hamiltonian. We refer the reader to the 1993 survey of Holton and Sheehan~\cite{HS93}, and, for recent results, to the article of Jooyandeh, McKay, \"{O}sterg{\aa}rd, Pettersson, and the second author~\cite{JMOPZ}.

Following Aldred, McKay, and Wormald~\cite{AMW97}, we call a graph $G$ \emph{hypocyclic} if for every $v \in V(G)$, the graph~$G - v$ is hamiltonian. Hamiltonian hypocyclic graphs are often called ``1-hamiltonian'', see e.g.~\cite{CKL70}, so the family of all hypocyclic graphs is the disjoint union of the families of all 1-hamiltonian and hypohamiltonian graphs. A graph $G$ is \emph{almost hypocyclic} if there exists a $w \in V(G)$ such that $G - w$ is non-hamiltonian, but for every vertex $v \ne w$ in $G$, the graph~$G - v$ is hamiltonian. We shall call $w$ the \emph{exceptional vertex} of $G$. If $G$ is additionally non-hamiltonian, then $G$ is called \emph{almost hypohamiltonian} (with exceptional vertex $w$).

The \emph{girth} of a graph is the length of its shortest cycle. A cycle of length~$k$ will be called a \emph{$k$-cycle}. Consider a graph~$G$. For $S \subset V(G)$, $G[S]$ shall denote the graph induced by $S$. A not necessarily connected subgraph $G' = (V(G'),E(G')) \subset G = (V(G),E(G))$ is \emph{spanning} if $V(G') = V(G)$. For a set $X$, we denote by $|X|$ its cardinality.

Let us motivate our interest in the titular family of graphs, i.e.\ almost hypohamiltonian graphs. Two motivations paved the way for other applications: firstly, although Thomassen's question from 1978~\cite{Th78} whether 4-connected hypohamiltonian graphs exist remains open, it can be shown that there are infinitely many 4-connected almost hypohamiltonian graphs~\cite{Za15}. Secondly, almost hypohamiltonian graphs can be used to construct hypohamiltonian graphs. This approach yields, as we shall see in Section~\ref{subsect:planar_case}, an alternative proof of the fact that infinitely many planar hypohamiltonian graphs exist---this was first shown by Thomassen~\cite{Th76}, while Chv\'atal~\cite{Ch73} had asked whether the statement is true, and Gr\"unbaum~\cite[p.~37]{Gr74} had conjectured that it is not.

In a different direction, using a result of Tutte, Thomassen proved in~\cite{Th78} the beautiful theorem that if all vertex-deleted subgraphs of a planar graph of minimum degree at least~4 are hamiltonian, then the graph itself must be hamiltonian. (The statement becomes untrue if ``4'' is replaced with ``3'', as proven by Thomassen~\cite{Th81}.) The second author extended this result in several directions~\cite{Z_cubic}, one of which concerns almost hypohamiltonian graphs and immediately yields, together with Thomassen's initial result, that if all but one vertex-deleted subgraphs of a planar graph of minimum degree at least~4 are hamiltonian, then the graph itself has to be hamiltonian. Inspired by another idea of Thomassen, the second author and T.~Zamfirescu~\cite{ZZ} recently solved the problem of Chv\'atal from 1973 whether \emph{any} graph can occur as an induced subgraph of some hypohamiltonian graph. For the solution, almost hypohamiltonian graphs were used---although hypohamiltonian graphs can be used as well, almost hypohamiltonian graphs yield smaller graphs with the desired properties in the planar case.

Wiener~\cite{Wi} characterised 2-fragments of hypotraceable graphs which have edge-connectivity~2 in terms of hypohamiltonian and almost hypohamiltonian graphs~\cite{Wi17}, complementing a result of Thomassen~\cite{Th74-1}. In a similar vein, historically all constructions of hypotraceable graphs (which are structurally notoriously difficult to tackle) relied on hypohamiltonian graphs as ``building blocks''.
(A graph $G$ is \textit{hypotraceable} if it is non-traceable, but $G - v$ is traceable for every $v \in V(G)$. A graph is \textit{traceable} if it contains a hamiltonian path.)
Wiener~\cite{Wi} established that almost hypohamiltonian graphs can be employed as well, and hereby improved the upper bound for the order of the smallest planar hypotraceable graph.
%Connected to problems on sets of vertices avoided by longest paths or longest cycles
Motivated by a classical problem of Gallai~\cite{Ga68}, we also give---similar to Wiener's application in~\cite{Wi}---improvements of two bounds of Jooyandeh, McKay, \"{O}sterg{\aa}rd, Pettersson, and the second author~\cite{JMOPZ}, see Section~\ref{section:long}.

In order to settle two questions raised by Wiener~\cite{Wi17}, the second author recently studied non-hamiltonian graphs in which every vertex-deleted subgraph is traceable, and their relation with almost hypohamiltonian graphs~\cite{Za17}. Steffen~\cite{St98} showed that hypohamiltonian snarks are \emph{bicritical}, i.e.\ the graph itself is not 3-edge-colourable but removing any two vertices gives a 3-edge-colourable graph. We shall briefly comment in Section~\ref{subsect:cubic_case} on the fact that the same holds for almost hypohamiltonian snarks.

We emphasise that in the vast majority of the above applications, it is crucial that the almost hypohamiltonian graph has a \emph{cubic} exceptional vertex. We shall therefore pay special attention to this case, and in particular to the case when all vertices of the graph are cubic.

This article is structured as follows. In Section~\ref{section:almhypoham} we extend a series of results (and settle certain open questions) of the second author~\cite{Za15,Za16}. In particular, in Section~\ref{subsect:generation_algo} we present a specialised algorithm for generating all almost hypohamiltonian graphs. We also use our implementation of this algorithm to generate complete sets of almost hypohamiltonian graphs for various orders and in Section~\ref{subsect:gen_case} we show that there exists an almost hypohamiltonian graph of order~$n$ if and only if $n \ge 17$, solving~\cite[Problem~2]{Za15}.

In Section~\ref{subsect:cubic_case} we show that the smallest cubic almost hypohamiltonian graphs have order~26, which settles the non-planar variant of~\cite[Problem~1]{Za15}---the planar version had been solved by McKay (private communication). In Section~\ref{subsect:planar_case} we give a lower bound for the order of the smallest planar almost hypohamiltonian graphs, and improve the upper bound for the order of the smallest planar almost hypohamiltonian graph whose exceptional vertex is cubic (from 47, see~\cite{Za15}, to 36), which are particularly important due to results from~\cite{Za15}. This answers \cite[Problem~6(a)]{Za16} and \cite[Problem~7]{Za16}. We also prove that such a graph of order $n$ exists for every $n \ge 73$, partially answering \cite[Problem~6(b)]{Za16}. Furthermore, we show that the smallest planar almost hypohamiltonian graph of girth~5 has order 44. (In the hypohamiltonian case, this number is~45, see~\cite{JMOPZ}.) Finally, in Section~\ref{subsect:planar_cubic_case} we determine that the smallest planar cubic almost hypohamiltonian graph has order at least~54, and that the smallest such graph which additionally has girth~5, is of order~68. (The corresponding number for the hypohamiltonian case is~76, see McKay's paper~\cite{Mc16}.) We end this article with Section~\ref{section:long} in which we give an application of almost hypohamiltonicity to problems concerning longest paths and longest cycles in polyhedral graphs.

%--------------------------------------------------------------------------

\section{Almost hypohamiltonian graphs}
\label{section:almhypoham}

\subsection{The generation algorithm}
\label{subsect:generation_algo}

In~\cite{GZ} we presented an algorithm for the exhaustive generation of hypohamiltonian graphs. It is based on work of Aldred, McKay, and Wormald~\cite{AMW97}. We now describe how we modified this algorithm to generate almost hypohamiltonian graphs exhaustively. We point out that in the following arguments there is some overlap with~\cite{GZ}---still, we choose to present these results for two reasons: firstly, changes with respect to~\cite{GZ} are nonetheless necessary and important, and secondly, we would like to make this paper to a large extent self-contained.

Let $G$ be a possibly disconnected graph, and $p(G)$ the minimum number of disjoint paths needed to cover all vertices of $G$. Denote by $V_1(G)$ the vertices of degree~1 in $G$, and by $I(G)$ the set of all isolated vertices and all isolated $K_2$'s (i.e.\ isolated edges together with their endpoints) of~$G$. Put
$$k(G) = \begin{cases}
                            0 & {\rm if } \ G \ {\rm is \ empty,}\\
                            \max \left\{ 1, \left\lceil{\frac{|V_1|}{2}}\right\rceil \right\}  & {\rm if } \ I(G) = \emptyset \ {\rm but} \ G \ {\rm is \ not \ empty},\\
                            |I(G)| + k(G - I(G)) & {\rm else}.
                        \end{cases}$$

We now adapt two lemmas of Aldred, McKay, and Wormald~\cite{AMW97} from the hypocyclic to the almost hypocyclic case.

\begin{lemma}\label{lem:A+B}
Given an almost hypocyclic graph $G$, for any partition $(W,X)$ of the vertices of $G$ with $|W| > 1$ and $|X| > 1$, we have that $$p(G[W]) < |X| \quad {\rm {\it and}} \quad k(G[W]) < |X|.$$
\end{lemma}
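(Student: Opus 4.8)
The plan is to exploit the almost hypocyclic property through a single, carefully chosen vertex deletion. The crucial difference with the hypocyclic situation treated by Aldred, McKay, and Wormald is that here $G - w$ need not be hamiltonian, so we are not free to delete an arbitrary vertex of $X$. Since $|X| > 1$, however, $X$ contains a vertex $v \neq w$, and for this $v$ the defining property of almost hypocyclicity guarantees that $G - v$ is hamiltonian. Fix a hamiltonian cycle ${\mathfrak h}$ of $G - v$; it visits every vertex of $W \cup (X \setminus \{ v \})$, and in particular all of $W$, as $v \in X$.

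The key step is then a counting argument on ${\mathfrak h}$. Colour each vertex of ${\mathfrak h}$ according to whether it lies in $W$ or in $X \setminus \{ v \}$, and consider the maximal monochromatic arcs. Both colour classes are nonempty (as $|W| > 1$ and $|X| - 1 \ge 1$), so traversing the cycle the arcs alternate between the two classes; consequently the number of $W$-arcs equals the number of $(X \setminus \{ v \})$-arcs, which is at most $|X \setminus \{ v \}| = |X| - 1$. The $W$-arcs are pairwise disjoint paths using only edges of $G[W]$ and together covering $W$, so they form a path cover of $G[W]$. Hence $p(G[W]) \le |X| - 1 < |X|$, which is the first inequality.

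For the second inequality I would reduce to the first by establishing $k(H) \le p(H)$ for every graph $H$, so that $k(G[W]) \le p(G[W]) < |X|$. This comparison follows by induction along the recursive definition of $k$: an isolated vertex or isolated edge must form its own path in any cover, accounting for the term $|I(H)|$; when $I(H) = \emptyset$, a degree-$1$ vertex can only be an end-vertex of a path, and a cover by $t$ paths has at most $2t$ such ends, forcing $p(H) \ge \lceil |V_1|/2 \rceil$, while $p(H) \ge 1$ for nonempty $H$; the empty case is trivial. Matching each term of $k$ against the corresponding forced contribution to a minimum path cover yields $k(H) \le p(H)$, and hence the claim.

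The main obstacle is the very first step, namely securing a deletable vertex distinct from the exceptional vertex $w$: this is precisely what the hypothesis $|X| > 1$ buys us, and it is the only place where the passage from the hypocyclic to the almost hypocyclic setting genuinely requires care. Everything afterwards—the alternation count and the elementary bound $k \le p$—is insensitive to the location of $w$, since in $G - v$ the vertex $w$, wherever it lies, behaves like any other vertex. I would also record the symmetric remark that, because $|W| > 1$ as well, running the same argument with the roles of $W$ and $X$ interchanged gives $p(G[X]) < |W|$ and $k(G[X]) < |W|$, which is how the lemma is typically invoked.
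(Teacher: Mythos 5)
Your proposal is correct and follows essentially the same route as the paper: pick $v \in X$ with $v \ne w$ (possible since $|X| > 1$), take a hamiltonian cycle of $G - v$, and bound the number of components of its restriction to $W$ by $|X| - 1$, then dispose of the $k$-inequality via $k \le p$. The paper merely asserts $k(G[W]) \le p(G[W])$ as ``clear,'' whereas you supply the short induction; otherwise the arguments coincide.
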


Consider a graph $G$ containing a partition $(W,X)$ of its vertices with $|W| > 1$ and $|X| > 1$. If $p(G[W]) \ge |X|$, then we call $(W,X)$ a \emph{type A obstruction}, and if $k(G[W]) \ge |X|$, then we speak of a \emph{type B obstruction}. Note that $k(G) \le p(G)$, so every type B obstruction is also a type A obstruction. However, for efficiency reasons we only consider type A obstructions where $G[W]$ is a union of disjoint paths in the algorithm. Therefore it is still useful to consider type B obstructions as well.

\begin{lemma}\label{lem:C}
Let $G$ be an almost hypocyclic graph and consider a partition $(W,X)$ of the vertices of $G$ with $|W| > 1$ and $|X| > 1$ such that $W$ is an independent set. Furthermore, for some vertex $v \in X$, define $n_1$ and $n_2$ to be the number of vertices of $X - v$ joined to one or more than one vertex of $W$, respectively. Then we have $2n_2 + n_1 \ge 2 |W|$ for every $v \in X$.
\end{lemma}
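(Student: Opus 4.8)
The plan is to adapt the Hamiltonian-cycle counting argument of Aldred, McKay, and Wormald, treating the exceptional vertex $w$ separately. For the routine case I would fix $v \in X$ with $v \neq w$, so that $G - v$ is hamiltonian; let $\mathfrak{h}$ be a Hamiltonian cycle. Since $v \in X$, the cycle $\mathfrak{h}$ spans $W$, and because $W$ is independent, for each $u \in W$ both neighbours of $u$ on $\mathfrak{h}$ lie in $X - v$. Hence the $2|W|$ edges of $\mathfrak{h}$ incident with $W$ each join $W$ to $X - v$, and writing $\deg_W(x)$ for the number of neighbours of $x$ in $W$, a vertex $x \in X - v$ is incident with at most $\min\{2,\deg_W(x)\}$ of them. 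Summing over $x \in X - v$ yields $2|W| \le \sum_{x \in X - v}\min\{2,\deg_W(x)\} = n_1 + 2n_2$, which is the claim. This already settles \emph{every} $v \in X$ when $w \in W$, since then no $v \in X$ equals $w$.

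The main obstacle is the case $w \in X$ with $v = w$, where $G - w$ is non-hamiltonian and no Hamiltonian cycle is available. Here I would argue by transfer. Set $S = \sum_{x \in X}\min\{2,\deg_W(x)\}$. The routine case applied to any $v' \in X - w$ gives $S - \min\{2,\deg_W(v')\} \ge 2|W|$, while the desired inequality for $v = w$ is precisely $S - \min\{2,\deg_W(w)\} \ge 2|W|$. It therefore suffices to exhibit a single $v' \in X - w$ with $\min\{2,\deg_W(v')\} \ge \min\{2,\deg_W(w)\}$. If $\deg_W(w)=0$ this is immediate; if $\deg_W(w)=1$, such a $v'$ exists unless $w$ is the only vertex of $X$ adjacent to $W$, but then, as $\deg_W(w)=1$ and $|W|>1$, some vertex of $W$ is isolated, contradicting connectedness of $G$.

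The genuinely delicate subcase is $\deg_W(w) \ge 2$ while every other vertex of $X$ has at most one neighbour in $W$; then $n_2 = 0$ for $v=w$ and the transfer gives only $S \ge 2|W|+1$, one short of what is needed. I would close this gap using the fact that an almost hypocyclic graph has minimum degree at least $3$: if some $x$ had $\deg(x)\le 2$, then removing a suitable neighbour $a \neq w$ (its at most two neighbours cannot all be $w$) leaves $x$ with degree at most $1$ in the hamiltonian graph $G-a$, which is absurd. Granting $\delta(G)\ge 3$, in this subcase every $x \in X - w$ has $\deg_W(x)\le 1$, so $n_1 = \sum_{x \in X - w}\deg_W(x) = \sum_{u \in W}\deg(u) - \deg_W(w)$, the last equality because $W$ is independent. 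Since $\deg(u)\ge 3$ for each $u \in W$ and $\deg_W(w)\le |W|$, this gives $n_1 \ge 3|W| - |W| = 2|W|$, and hence $2n_2 + n_1 = n_1 \ge 2|W|$. The crux of the whole argument is that for the exceptional vertex it is the minimum-degree bound, and not any hamiltonicity of $G-w$, that rescues the estimate.
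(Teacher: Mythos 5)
Your proof is correct, and for the main case it is exactly the paper's argument: the authors also pick $v \in X$ with $v \neq w$, take a hamiltonian cycle $\mathfrak{h}$ of $G - v$, and observe that the $2|W|$ edges of $\mathfrak{h}$ leaving the independent set $W$ must be absorbed by $X - v$, each vertex of which supplies at most $\min\{2,\deg_W(x)\}$, giving $2n_2 + n_1 \ge 2|W|$. Where you genuinely go beyond the paper is the case $w \in X$ and $v = w$. The paper's proof (a joint two-line proof of Lemmas~\ref{lem:A+B} and~\ref{lem:C}) simply says ``consider $v \in X$ with $v \ne w$ (this choice is possible, since $|X|>1$)'' and stops; that suffices for Lemma~\ref{lem:A+B}, whose conclusion does not depend on $v$, but for Lemma~\ref{lem:C} the quantities $n_1, n_2$ do depend on $v$, and the stated conclusion (and the type~C pruning rule built on it) is for \emph{every} $v \in X$, including the exceptional vertex. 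Your transfer argument --- comparing $S - \min\{2,\deg_W(w)\}$ with $S - \min\{2,\deg_W(v')\}$ for a well-chosen $v' \ne w$ --- together with the minimum-degree bound $\delta(G) \ge 3$ (which you correctly derive for almost hypocyclic graphs, not just almost hypohamiltonian ones) cleanly closes the one remaining subcase where $\deg_W(w) \ge 2$ but no other vertex of $X$ has two neighbours in $W$. So your write-up is not merely a different route: it supplies a case the paper's proof omits, at the cost of a longer argument; the paper's version buys brevity by treating the exceptional vertex only implicitly. The only nitpick is the parenthetical ``its at most two neighbours cannot all be $w$,'' which fails for a degree-one vertex whose unique neighbour is $w$; that degenerate case is dispatched just as easily (delete any third vertex), so nothing breaks.
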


If all assumptions of Lemma~\ref{lem:C} are met and $2n_2 + n_1 < 2 |W|$ for some $v \in X$, we call $(W,X,v)$ a \emph{type C obstruction}. The following proof is a straightforward adaptation of the proof of Aldred, McKay, and Wormald~\cite{AMW97}.

\begin{proof}
\emph{(Proof of Lemmas~\ref{lem:A+B} and \ref{lem:C}).} Let $w$ be the exceptional vertex of $G$. Consider $v \in X$ with $v \ne w$ (this choice is possible, since $|X| > 1$), and let ${\mathfrak h}$ be a hamiltonian cycle in $G - v$. The number of components of ${\mathfrak h}$ restricted to $W$ must be at least $p(G[W])$, so the number of components in ${\mathfrak h}$ restricted to $X - v$ is at least $p(G[W])$ (and thus we have $|X| - 1 \ge p(G[W])$). %This handles type A and, since clearly $k(G) \le p(G)$, type B obstructions. For type C,

Finally, consider the same cycle ${\mathfrak h}$. The number of edges of ${\mathfrak h}$ between $W$ and $X$ must be $2|W|$, but $X$ can supply at most $2n_2 + n_1$.
\end{proof}

The first part of the above proof handles type A and, since $k(G) \le p(G)$, type B obstructions. The second part deals with type C obstructions.

\begin{lemma}\label{lem:3-conn}
Let $G$ be an almost hypohamiltonian graph. Then $G$ is $3$-connected. In particular, $G$ has minimum degree~$3$.
\end{lemma}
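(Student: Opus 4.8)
The plan is to prove the two assertions in sequence, deriving 3-connectedness first and then extracting the minimum degree bound as a consequence. Throughout I would let $G$ be almost hypohamiltonian with exceptional vertex $w$, and I would rely heavily on the obstruction lemmas (Lemmas~\ref{lem:A+B} and~\ref{lem:C}) just proved, since these are precisely tailored to rule out small vertex cuts.

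First I would establish that $G$ is $2$-connected. Suppose not; then either $G$ has a cut vertex or $G$ is disconnected (the latter is excluded since every graph here is assumed connected). If $u$ is a cut vertex, I would choose a component structure and consider $G - v$ for a vertex $v \ne w$ lying in one of the ``far'' blocks, so that $G - v$ is forced to be hamiltonian by the almost-hypohamiltonicity of $G$; but a hamiltonian cycle cannot survive the cut vertex $u$ once the rest of its block is detached, yielding a contradiction. The cleaner route, which I would prefer, is to phrase this directly through Lemma~\ref{lem:A+B}: a cut vertex or a $2$-cut gives a partition $(W,X)$ with $|X| \le 2$ and $|W| > 1$ for which the path-cover number $p(G[W])$ is too large, contradicting $p(G[W]) < |X|$. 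Concretely, if $\{u\}$ separates $G$, take $X$ to contain $u$ together with one further vertex and let $W$ be the remaining vertices, arranged so that $W$ splits into at least two parts; then $p(G[W]) \ge 2 = |X|$, violating the lemma.

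Next I would upgrade $2$-connectedness to $3$-connectedness in the same spirit. Assuming $\{x,y\}$ is a $2$-cut, I would set $X = \{x,y\}$, so $|X| = 2$, and let $W = V(G) \setminus X$, which is disconnected into at least two pieces by the cut. Then $G[W]$ needs at least two disjoint paths to be covered, so $p(G[W]) \ge 2 = |X|$, again contradicting Lemma~\ref{lem:A+B}. One must check the side conditions $|W| > 1$ and $|X| > 1$ hold, which is immediate once $G$ is large enough (and the small cases can be dispatched by noting an almost hypohamiltonian graph has order at least~$17$). This establishes that no $2$-cut exists, hence $G$ is $3$-connected.

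Finally, the minimum degree statement follows because in any graph a vertex of degree $d$ has its neighbourhood as a vertex cut of size $d$ (separating the vertex from the rest, assuming the graph is not complete); since $G$ is $3$-connected it can contain no cut of size~$2$ or less, forcing $\deg(v) \ge 3$ for every $v$. The main obstacle I anticipate is handling the degenerate small cases and verifying the side hypotheses $|W|>1$, $|X|>1$ of the obstruction lemmas when the cut is small, together with ensuring that the vertex $v$ used in the proof of those lemmas can be chosen distinct from the exceptional vertex $w$ — but since $|X| > 1$ this choice is always available, exactly as in the proof of Lemmas~\ref{lem:A+B} and~\ref{lem:C}, so the argument goes through.
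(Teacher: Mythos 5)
Your proof is correct, but it takes a genuinely different route from the paper's. The paper argues directly from the definition: $G$ is $2$-connected because all but one single-vertex deletion must be hamiltonian, and for a putative $2$-cut $\{x,y\}$ it observes that each of $G-x$ and $G-y$ retains a cut vertex and is therefore non-hamiltonian, contradicting the fact that $G-v$ is non-hamiltonian for at most one vertex $v=w$. You instead route everything through Lemma~\ref{lem:A+B}, converting a small cut into a type~A obstruction $(W,X)$ with $p(G[W]) \ge |X|$. Both arguments are sound and short; the paper's is more elementary and self-contained, while yours reuses machinery already established and makes explicit the connection between small cuts and the obstructions driving the generation algorithm. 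Two small points to tighten: in the cut-vertex case you must choose the second vertex of $X$ so that $G[W]$ really is disconnected (possible because some component of $G-u$ has at least two vertices once $|V(G)| \ge 4$); and you should not lean on the bound $|V(G)| \ge 17$, since in this paper that fact is established only later and partly by the very computations this lemma supports --- the trivial bound $|V(G)| \ge 4$, which holds because $G-v$ is hamiltonian for some $v$, already gives the side conditions $|W|>1$ and $|X|>1$.
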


\begin{proof}
We denote the exceptional vertex of $G$ by $w$. Since for every vertex $v \ne w$ the graph $G - v$ is hamiltonian, $G$ must be 2-connected. Now let $\{ x,y \}$ be a 2-cut in $G$. Both $G - x$ and $G - y$ are non-hamiltonian, which is impossible. So $G$ does not contain 2-cuts, thus $G$ is 3-connected.
\end{proof}

\medskip

Note that if we allow more than one exceptional vertex, 3-connectedness is not guaranteed anymore. Put rigorously: consider a 2-connected graph $G$ of circumference $|V(G)| - 1$ and let $W \subset V(G)$ be the (possibly empty) set of all vertices such that for every $w \in W$ the graph $G - w$ is non-hamiltonian. (And thus, for all $v \in V(G) \setminus W$, the graph $G - v$ is hamiltonian.) We say that $G$ is \emph{$|W|$-hypohamiltonian}. Although 0-hypohamiltonian (i.e.\ hypohamiltonian) and 1-hypohamiltonian (i.e.\ almost hypohamiltonian) graphs are 3-connected, it is easy to see that 2-hypohamiltonian graphs of connectivity~2 exist: consider $K_{2,3}$. This emphasises the intrinsic qualitative difference between hypohamiltonian and almost hypohamiltonian graphs on the one hand, and $k$-hypohamiltonian graphs with $k \ge 2$ on the other hand.

\begin{lemma}\label{lemma:c4c}
Let $M$ be a $3$-edge-cut in an almost hypohamiltonian graph $G$ with exceptional vertex $w$. Then $G - M$ contains exactly two components $A_1$ and $A_2$ with $A = K_1$ and $A_2 \ne K_1$. In particular, almost hypohamiltonian graphs are cyclically $4$-edge-connected.
\end{lemma}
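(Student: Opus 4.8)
The plan is to analyze a 3-edge-cut $M$ in an almost hypohamiltonian graph $G$ with exceptional vertex $w$, and show that removing $M$ leaves exactly two components, one of which is a single vertex $K_1$. First I would observe that since $G$ is 3-connected (by the previous lemma), $G - M$ cannot have more than two components: three or more components would force one of them to be separated from the rest by fewer than three edges, contradicting 3-edge-connectivity (which follows from 3-connectedness for this edge count). So $G - M$ has exactly two components, call them $A_1$ and $A_2$. The core of the argument is then to prove that one of these components is trivial, i.e.\ a single vertex.

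The key step is to suppose, for contradiction, that both $A_1$ and $A_2$ contain at least two vertices, and then derive a contradiction using the hamiltonicity of $G - v$ for suitable vertices $v$. The idea is that any hamiltonian cycle of $G - v$ must cross the cut $M$ an even number of times, and since $|M| = 3$, it can use either $0$ or $2$ of the three cut-edges. If it uses $0$ edges, the cycle lives entirely in one side, which is impossible once both sides are nontrivial and $v$ is chosen appropriately. If it uses exactly $2$ edges, then the cycle restricted to each side $A_i - v$ is a single path whose endpoints are the two vertices incident with the used cut-edges. The plan is to pick $v \ne w$ in, say, $A_2$ so that $G - v$ is hamiltonian, and then examine how the resulting spanning path of $A_1$ forces structural constraints; by varying $v$ across the vertices of $A_2 \setminus \{w\}$, one accumulates enough good-pair conditions on $A_1$ to show that $A_1$ itself (or a small modification using the three cut-edges) would be hamiltonian or would make $G$ hamiltonian, contradicting the non-hamiltonicity of $G$.

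More concretely, I expect the argument to proceed by considering the three endpoints $a_1, a_2, a_3$ of the cut-edges inside $A_1$ (and similarly $b_1, b_2, b_3$ inside $A_2$). A hamiltonian cycle of $G - v$ with $v \in A_2$ crossing $M$ twice, say along the edges at $a_i, a_j$, yields a hamiltonian path of $A_1$ with endpoints $a_i, a_j$. If every choice of $v \in A_2 \setminus \{w\}$ could only ever force the same pair $\{a_i,a_j\}$, then adding the single cut-edge at $a_i$ or $a_j$ together with a path through $A_2$ would build a hamiltonian cycle of $G$; to avoid this one shows the forced pairs must vary, which then gives hamiltonian paths of $A_1$ for at least two distinct endpoint-pairs, and contracting $A_2$ to a single vertex (legitimate since $A_2$ is connected and its internal routing is flexible) produces a hamiltonian cycle of $G$, the desired contradiction. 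Thus one of $A_1, A_2$ must be $K_1$.

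The main obstacle will be the bookkeeping in the case analysis of which two of the three cut-edges a hamiltonian cycle uses, and ensuring that as $v$ ranges over a nontrivial side one genuinely obtains two \emph{different} endpoint-pairs rather than repeatedly the same one; the exceptional vertex $w$ complicates this because $G - w$ need not be hamiltonian, so one must be careful to always choose $v \ne w$ and verify that the nontrivial side still has at least one admissible vertex to delete. Once the dichotomy ``$A_1 = K_1$'' is established, cyclic $4$-edge-connectivity follows immediately: a cyclic edge-cut must leave both sides containing a cycle, hence both sides nontrivial, so no $3$-edge-cut can be cyclic, which is exactly the assertion that $G$ is cyclically $4$-edge-connected.
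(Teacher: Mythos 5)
Your setup is sound (exactly two components via $3$-connectivity; a hamiltonian cycle of $G-v$ crosses $M$ exactly twice; its restriction to the far side is a hamiltonian path between two of the cut-endpoints), but the completion has a genuine gap. To glue a hamiltonian cycle of $G$ out of the pieces you need \emph{two} spanning paths with matching indices: a hamiltonian path of $A_1$ with end-vertices $a_j,a_k$ \emph{and} a hamiltonian path of $A_2$ with end-vertices $b_j,b_k$. Your plan only ever deletes vertices $v$ from $A_2$, which yields hamiltonian paths of $A_1$ but only of $A_2-v$, never of all of $A_2$; and your two proposed finishes do not repair this. Having the same forced pair $\{a_i,a_j\}$ for every $v$ does not let you ``add a path through $A_2$'' --- you have no hamiltonian path of $A_2$ between $b_i$ and $b_j$ to add. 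And contracting $A_2$ to a point produces a hamiltonian cycle of the contracted graph, which says nothing about $G$ unless the contracted vertex can be re-expanded into a spanning path of $A_2$ with the right ends; ``its internal routing is flexible'' is precisely the unproved claim.

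The paper's proof supplies the missing idea with a pigeonhole choice you don't make: writing $M=\{a_1b_1,a_2b_2,a_3b_3\}$ with all six endpoints distinct (by $3$-connectivity), there is an index $i$ with $a_i\ne w\ne b_i$, so both $G-a_i$ and $G-b_i$ are hamiltonian. Deleting $a_i$ kills the cut-edge $a_ib_i$, so the hamiltonian cycle of $G-a_i$ is forced to use exactly the other two cut-edges, giving a hamiltonian path of $A_2$ with ends $b_j,b_k$; symmetrically, $G-b_i$ gives a hamiltonian path of $A_1$ with ends $a_j,a_k$ for the \emph{same} pair $\{j,k\}$. These two paths together with $a_jb_j$ and $a_kb_k$ form a hamiltonian cycle of $G$, the desired contradiction. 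The essential point is to delete the two endpoints of one well-chosen cut-edge, one from each side, so that the surviving pair of cut-edges (and hence the endpoint indices) is forced and identical in both cases; without that device your endpoint pairs need not match and the gluing cannot be carried out.
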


\begin{proof}
Recall that a 3-edge-cut cannot separate a 3-(edge-)connected graph into more than two parts. Let $G - M$ contain components $A_1,A_2$ and assume that $A_1 \ne K_1$ and $A_2 \ne K_1$. We put $M = \{ a_1b_1, a_2b_2, a_3b_3 \}$, where $a_i \in V(A_1), b_i \in V(A_2)$ for all $i$. Since $G$ is 3-connected, the elements of the set $\{ a_1, a_2, a_3, b_1, b_2, b_3 \}$ are pairwise distinct. As $G$ is almost hypohamiltonian, there exists an $i \in \{ 1,2,3 \}$ such that $G - b_i$ and $G - a_i$ both are hamiltonian. Hence, there is a hamiltonian path in $A_1$ with end-vertices $a_j$ and $a_k$, $j \ne i \ne k$, and a hamiltonian path in $A_2$ with end-vertices $b_j$ and $b_k$, $j \ne i \ne k$. These paths together with $a_jb_j$ and $a_kb_k$ yield a hamiltonian cycle in $G$, a contradiction.
\end{proof}

\medskip

Collier and Schmeichel~\cite[p.~196]{CS78} observed that the vertices of a triangle in a hypohamiltonian graph have degree at least~4. We now show that this holds for almost hypohamiltonian graphs, as well.

\begin{lemma}\label{lemma:no_deg3_triangle}
Let $G$ be an almost hypohamiltonian graph containing a triangle $T$. Then every vertex of $T$ has degree at least~$4$.
\end{lemma}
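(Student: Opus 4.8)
The plan is to argue by contradiction, supposing that some vertex of the triangle $T = xyz$ has degree exactly~$3$, and then exploiting the hamiltonicity of $G - v$ for non-exceptional $v$ to force a hamiltonian cycle in $G$ itself, contradicting non-hamiltonicity. Since $G$ is almost hypohamiltonian with exceptional vertex $w$, we know $G$ has minimum degree~$3$ (by the earlier lemma), so a degree-$3$ triangle vertex has exactly one neighbour outside $T$. The key structural observation, following the spirit of Collier and Schmeichel, is that a cubic triangle vertex severely constrains how any hamiltonian cycle can pass through the triangle: if $x$ is cubic with its third neighbour $x'$, then in any hamiltonian cycle two of $x$'s three incident edges are used, and this rigidity is what we want to leverage.

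First I would set up the contradiction: assume $x \in V(T)$ has $\deg(x) = 3$, writing $N(x) = \{y, z, x'\}$ where $y,z$ are the other triangle vertices. I would then choose a suitable vertex to delete so as to obtain a hamiltonian cycle that can be rerouted through the triangle to cover all of $V(G)$. The natural move is to consider $G - y$ (or $G - z$), provided $y \ne w$, which is hamiltonian; in such a cycle $\mathfrak{h}$, the vertex $x$ has degree~$2$, and since $y$ is deleted, $x$ must use the edges $xz$ and $xx'$. The aim is to show that $\mathfrak{h}$ can be modified into a hamiltonian cycle of $G$ by inserting $y$ — for instance if $\mathfrak{h}$ uses the edge $zz'$ for some neighbour configuration allowing $y$ to be spliced between $z$ and $x$, or by a short local exchange using the triangle edges $xy$ and $yz$ that are available in $G$ but unused in $G-y$.

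The main case analysis will turn on which vertices of $T$ are non-exceptional, since at most one of $x,y,z$ can equal $w$. I would split into the case where $x = w$ (the cubic vertex is the exceptional one) and the case where $x \ne w$. When $x \ne w$, deleting $x$ gives a hamiltonian cycle of $G - x$, and I would try to reinsert $x$ using two of its triangle edges; the rigidity from $\deg(x)=3$ limits how $y$ and $z$ are joined in that cycle. When $x = w$, I instead delete $y$ or $z$ (both non-exceptional) and use the forced passage through the cubic vertex $x$ as described above. In each subcase the goal is the same contradiction: a hamiltonian cycle in $G$.

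The hard part will be handling the local reconnection cleanly in every subcase — ensuring that after splicing in the deleted triangle vertex the result is genuinely a single spanning cycle rather than two cycles or a path, and verifying that the required triangle edges (which exist in $G$ but are absent from the deleted subgraph's cycle) can always be substituted in. The delicate point is that we only get to delete \emph{non-exceptional} vertices freely, so the argument must be arranged so that whenever we need $G - v$ hamiltonian we have genuinely chosen $v \neq w$; since $T$ has three vertices and $w$ is a single vertex, at least two triangle vertices are always non-exceptional, which should provide enough freedom to run the rerouting. I expect the proof to reduce to a compact case check on how a hamiltonian cycle of $G$ minus a non-exceptional triangle vertex interacts with the single external edge at the cubic vertex.
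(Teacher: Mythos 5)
Your core idea is exactly the paper's proof: pick a non-exceptional vertex among the two triangle neighbours $y,z$ of the cubic vertex $x$ (at least one of them is not $w$), take a hamiltonian cycle ${\mathfrak h}$ of $G$ minus that vertex, observe that $x$ now has degree $2$, so both of its remaining edges --- in particular its edge to the surviving triangle vertex --- are forced into ${\mathfrak h}$, and replace that forced edge by the length-two path through the deleted vertex (your ``local exchange using the triangle edges''). This is a plain edge-to-path substitution with the same endpoints and one new internal vertex, so your worry about ending up with two cycles or a path is unfounded: the result is automatically a single spanning cycle of $G$, contradicting non-hamiltonicity. This is precisely the argument in the paper, which phrases it as: with $v_3$ cubic, at least one of $G-v_1$, $G-v_2$ is hamiltonian, the edge $v_3v_2$ is then forced, and it is replaced by $v_3v_1v_2$.

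The one concrete misstep is your plan for the subcase $x \ne w$: deleting the cubic vertex $x$ itself and reinserting it does not work, because reinsertion via the triangle requires $yz$ to be an edge of the hamiltonian cycle of $G-x$, and nothing forces this --- $y$ and $z$ may have degree at least $4$ and the cycle may pass through both while avoiding $yz$. Fortunately that branch is superfluous: the argument of the previous paragraph never deletes $x$ and makes no use of whether $x = w$, since it only requires one of $y,z$ to be non-exceptional, which always holds. Drop the case split on $x = w$ and your sketch closes into the paper's proof.
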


\begin{proof}
Put $V(T) = \{ v_1, v_2, v_3 \}$, where $v_3$ shall be cubic. Since $G$ is almost hypohamiltonian, at least one of $G - v_1$ and $G - v_2$ must be hamiltonian, say $G - v_1$. Let ${\mathfrak h}$ be a hamiltonian cycle in $G - v_1$. As $v_3v_2 \in E({\mathfrak h})$, replace in ${\mathfrak h}$ the edge $v_3v_2$ with the path $v_3v_1v_2$. We obtain a hamiltonian cycle in $G$, a contradiction.
\end{proof}

\medskip

We make use of these lemmas in our algorithm to generate all almost hypohamiltonian graphs---first we give an intuitive description, which is followed by the rigorous argument. Intuitively, by a \emph{good $Y$-edge} we mean an edge which works towards the removal of a type $Y$~obstruction, where $Y \in \{ A, B, C \}$. Let us now formally define these good $Y$-edges.

Given an almost hypohamiltonian graph $G'$, and let $G$ be a spanning subgraph of $G'$ which contains a type A obstruction $(W,X)$ (we choose $W$ such that $G[W]$ is a union of disjoint paths). Since $G'$ is almost hypohamiltonian it cannot contain a type A obstruction.
Thus, there must be an edge in $E(G') \setminus E(G)$ whose endpoints are in different components of $G[W]$. We call such an edge a \textit{good $A$-edge} for $(W,X)$. Note that there must also be an edge in $E(G') \setminus E(G)$ for which at least one of the endpoints has degree at most one in $G[W]$ (but its endpoints could be in the same component of $G[W]$). Our computational experiments indicate that when generating graphs with girth at least 4, it is more efficient to define a \textit{good $A$-edge} for $(W,X)$ as an edge in $E(G') \setminus E(G)$ for which at least one of the endpoints has degree at most one in $G[W]$.
To clarify our procedure: let $\bar G$ be the graph obtained after adding a good $A$-edge to $G$. If $\bar G[W]$ contains vertices of degree 3, we remove those vertices from $W$ (and add them to $X$) for the next iteration of the algorithm in order to guarantee that $\bar G[W]$ remains a union of disjoint paths.
%Thus, there is an edge $e \in E(G') \setminus E(G)$ the endpoints of which lie in different components of $G[W]$. Furthermore, at least one of the endpoints of $e$ has degree at most one in $G[W]$. We call such an edge a \textit{good $A$-edge} for $(W,X)$.

In the same vein, a \textit{good $B$-edge} for a type B obstruction $(W,X)$ in $G$ is a non-edge of $G$ that joins two vertices of $W$, where at least one of those vertices has degree at most one in $G[W]$. Finally, a \textit{good $C$-edge} for a type C obstruction $(W,X,v)$ in $G$ is a non-edge $e$ of $G$ for which one of the two following conditions holds:

\begin{itemize}
\item[(i)] Both endpoints of $e$ are in $W$.

\item[(ii)] One endpoint of $e$ is in $W$ and the other endpoint is in $X-v$ and has at most one neighbour in $W$.
\end{itemize}

It is straightforward that this is the only way to destroy a type B/C obstruction.

The pseudocode of our enumeration algorithm is given in Algorithm~\ref{algo:init-algo} and Algorithm~\ref{algo:construct}.
It is analogous to the algorithm we presented in~\cite{GZ} to generate all hypohamiltonian graphs. Nonetheless, we choose to present it here as well for the sake of convenience.

In order to generate all almost hypohamiltonian graphs with $n$~vertices we start from a graph $G$ which consists of an $(n-1)$-cycle (which we will denote as $C_{n-1}$) and an isolated vertex $h$ (disjoint from the cycle), so $G - h$ is hamiltonian. We then connect $h$ to $D$~vertices of the $(n-1)$-cycle in all possible ways and then perform Algorithm~\ref{algo:construct} on these graphs. Algorithm~\ref{algo:construct} will continue to recursively add edges in all possible ways such that at most one vertex $w$, the exceptional vertex, has degree larger than~$D$.

It is essential for the efficiency of the algorithm that as few as possible edges are added by Algorithm~\ref{algo:construct}, while still guaranteeing that all almost hypohamiltonian graphs are found by the algorithm.

We omit the proof of the following theorem as it is analogous to the proof of~\cite[Theorem~2.8]{GZ}.

\begin{theorem}
If Algorithm~\ref{algo:init-algo} terminates, the list of graphs $\mathcal H$ outputted by the algorithm is the list of all almost hypohamiltonian graphs with $n$ vertices.
\end{theorem}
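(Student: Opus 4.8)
The plan is to establish a correspondence between the graphs enumerated by Algorithm~\ref{algo:init-algo} and the class of all $n$-vertex almost hypohamiltonian graphs, proving containment in both directions. The forward direction (soundness) requires showing that every graph output by the algorithm is genuinely almost hypohamiltonian; the reverse direction (completeness) requires showing that no almost hypohamiltonian graph is ever missed during the recursive edge-addition process. Since the theorem statement is phrased as an equality of lists, I would argue both inclusions and appeal to the fact that the algorithm performs isomorphism rejection (as in~\cite{GZ}) to guarantee the output is a list of pairwise non-isomorphic graphs, so that ``list'' and ``set'' coincide.

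For completeness, I would first observe that any almost hypohamiltonian graph $G$ on $n$ vertices has an exceptional vertex $w$ with $G-w$ non-hamiltonian, while $G-v$ is hamiltonian for all $v\ne w$. Picking any $v\ne w$, the graph $G-v$ contains a hamiltonian cycle, i.e.\ an $(n-1)$-cycle, and $G$ is recovered from this cycle by reinserting $v$ (playing the role of the isolated vertex $h$) together with its incident edges. Hence $G$ arises from some starting configuration of Algorithm~\ref{algo:init-algo}, and it remains to verify that the recursive procedure of Algorithm~\ref{algo:construct} never prunes a branch leading to $G$. This is exactly where the obstruction lemmas do the work: by Lemmas~\ref{lem:A+B} and~\ref{lem:C}, an almost hypocyclic (hence almost hypohamiltonian) graph contains no type A, B, or C obstruction, so the edges that the algorithm is forced to add---the good $Y$-edges destroying such obstructions---are all present in the target graph $G$. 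The key supporting claim, already asserted in the text just before the theorem, is that adding a good $Y$-edge is the \emph{only} way to destroy a type $Y$ obstruction; I would invoke this to conclude that at each step at least one edge the algorithm may legally add lies in $E(G)$, so a valid branch toward $G$ is always preserved.

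For soundness, I would argue that the algorithm only terminates a branch and reports a graph once that graph is free of all obstructions and has been verified to be non-hamiltonian with $G-v$ hamiltonian for every $v\ne w$; the degree-control mechanism (at most one vertex $w$ of degree exceeding $D$) together with the structural lemmas---$3$-connectedness, cyclic $4$-edge-connectivity, and the triangle degree condition of Lemma~\ref{lemma:no_deg3_triangle}---ensures the search space is correctly bounded without excluding any genuine example. The whole argument mirrors the proof of~\cite[Theorem~2.8]{GZ} line by line, the only modifications being the replacement of the hypocyclic obstruction bounds by their almost hypocyclic analogues (the strict inequalities in Lemmas~\ref{lem:A+B} and~\ref{lem:C}) and the bookkeeping of the single exceptional vertex $w$.

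The main obstacle I anticipate is the completeness argument, specifically verifying rigorously that the good $Y$-edge characterisation is exhaustive: one must be certain that destroying a type A/B/C obstruction can \emph{only} be accomplished by adding an edge of the prescribed form, with the degree-at-most-one endpoint condition in the type A and B cases being the delicate point, since otherwise a necessary branch could be silently discarded and completeness would fail. Because the text explicitly states this reduction is ``analogous to'' and the proof is ``analogous to the proof of~\cite[Theorem~2.8]{GZ},'' I expect the cleanest route is to transport that proof wholesale, carefully checking that each place where hypohamiltonicity was used survives the weakening to a single exceptional vertex---in particular that the choice of $v\ne w$ in the proof of Lemmas~\ref{lem:A+B} and~\ref{lem:C} leaves enough freedom, which it does precisely because $|X|>1$.
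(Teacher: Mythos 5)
Your proposal matches the paper's intent exactly: the paper in fact omits this proof altogether, stating only that it is analogous to the proof of~\cite[Theorem~2.8]{GZ}, and your soundness/completeness decomposition---soundness being immediate from the explicit almost-hypohamiltonicity test before a graph is added to $\mathcal H$, and completeness resting on the obstruction lemmas, the exhaustiveness of the good $Y$-edges, and the bookkeeping of the one exceptional vertex permitted to exceed degree $D$---is precisely that argument transported to the almost hypohamiltonian setting. The one imprecision is the phrase ``picking any $v \ne w$'': for the resulting branch to survive the degree constraint, $v$ must be chosen with $\deg(v) = \max_{u \ne w} \deg(u)$ (so that $w$ is the only vertex ever allowed to exceed $D$), but since such a $v$ always exists and is distinct from $w$, your existential conclusion that $G$ arises from some starting configuration is correct.
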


\begin{algorithm}[ht!]
\small
\caption{Generate all almost hypohamiltonian graphs with $n$ vertices}
\label{algo:init-algo}
  \begin{algorithmic}[1]
	\STATE let $\mathcal H$ be an empty list
	\STATE let $G := C_{n-1} + h$
	\FORALL{$3 \le D \le n-1$}
		\STATE // Generate all almost hypohamiltonian graphs where at least one vertex has degree $D$ and at most one vertex has degree larger than $D$
		\FOR{every way of connecting $h$ of $G$ with $D$ vertices of the $C_{n-1}$}
		\STATE Call the resulting graph $G'$
		\STATE Construct($G', D$) \ // i.e.\ perform Algorithm~\ref{algo:construct}
		\ENDFOR
	\ENDFOR
	\STATE Output $\mathcal H$
  \end{algorithmic}
\end{algorithm}

%\begin{algorithm}[ht!]
\begin{algorithm}[ht!]
\small
\caption{Construct(Graph $G$, int $D$)}
\label{algo:construct}
  \begin{algorithmic}[1]
		\IF{$G$ is non-hamiltonian AND not generated before} \label{line:hamiltonian}
			\IF{$G$ contains a type A obstruction $(W,X)$}
				\FOR{every good A-edge $e \notin E(G)$ for $(W,X)$ for which at most one vertex has degree larger than $D$ in $G+e$}
					\STATE Construct($G+e, D$) \label{line:destroy_typeA}
				\ENDFOR
			\ELSIF{$G$ contains a vertex $v$ of degree 2}
				\FOR{every edge $e \notin E(G)$ which contains $v$ as an endpoint for which at most one vertex has degree larger than $D$ in $G+e$}
					\STATE Construct($G+e, D$) \label{line:destroy_deg2}
				\ENDFOR
			\ELSIF{$G$ contains a type C obstruction $(W,X,v)$} \label{line:typeC_obstr}
				\FOR{every good C-edge $e \notin E(G)$ for $(W,X,v)$ for which at most one vertex has degree larger than $D$ in $G+e$}
					\STATE Construct($G+e, D$)
				\ENDFOR
			\ELSIF{$G$ contains a vertex $v$ of degree 3 which is part of a triangle} \label{line:deg3_triangle}
				\FOR{every edge $e \notin E(G)$ which contains $v$ as an endpoint for which at most one vertex has degree larger than $D$ in $G+e$}
					\STATE Construct($G+e, D$)
				\ENDFOR			
			\ELSIF{$G$ contains a type B obstruction $(W,X)$} \label{line:typeB_obstr}
				\FOR{every good B-edge $e \notin E(G)$ for $(W,X)$ for which at most one vertex has degree larger than $D$ in $G+e$}
					\STATE Construct($G+e, D$)
				\ENDFOR					
			\ELSE
				\IF{$G$ is almost hypohamiltonian} \label{line:hypohamiltonian}
					\STATE add $G$ to the list $\mathcal H$
				\ENDIF
					\FOR{every edge $e \notin E(G)$ for which at most one vertex has degree larger than $D$ in $G+e$}
						\STATE Construct($G+e, D$)
					\ENDFOR	
			\ENDIF
		\ENDIF	
  \end{algorithmic}
\end{algorithm}

Note that since our algorithm only adds edges and never removes any vertices or edges, all graphs obtained by the algorithm from a graph with a $g$-cycle will have a cycle of length at most~$g$. So in case we only want to generate almost hypohamiltonian graphs with a given lower bound~$k$ on the girth, we can prune the construction when a graph with a cycle with length less than~$k$ is constructed.

For more details on the algorithm we refer to~\cite{GZ}.

\subsection{The general case}
\label{subsect:gen_case}

By using our implementation of the algorithm described in Section~\ref{subsect:generation_algo} we generated all almost hypohamiltonian graphs with girth at least~$g$ of order~$n$ for various values of $g$ and $n$. The counts of the number of almost hypohamiltonian graphs can be found in Table~\ref{table:number_of_almhypoham_graphs}. Therein, we also mention the number of almost hypohamiltonian graphs whose exceptional vertex is cubic---these are of special importance due to the fact that two such graphs can be combined to form a hypohamiltonian graph, see \cite[Theorem~3]{Za15} for details. In each case we went as far as computationally possible.

Note that we could not determine the complete set of all almost hypohamiltonian graphs with 24 vertices and girth~5---however, we succeeded in obtaining a sample, which was essential to complete the proof of Theorem~\ref{thm:all_orders}. Combining our findings with the second author's~\cite[Theorem~2]{Za15}, we have the following, settling~\cite[Problem~2]{Za15}.

\begin{theorem} \label{thm:all_orders}
An almost hypohamiltonian graph of order $n$ exists if and only if $n \ge 17$. The smallest almost hypohamiltonian graph of girth~$4$ (girth~$5$) has order~$18$ (order~$17$).
\end{theorem}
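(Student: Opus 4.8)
The plan is to establish the theorem in two directions: a lower bound showing no almost hypohamiltonian graph exists for $n < 17$, and explicit constructions (together with closure-type arguments) furnishing graphs for every $n \ge 17$. The claim about minimum orders for girth~4 and girth~5 refines this, since it pins down exactly where the first examples appear once a girth constraint is imposed. I would first handle the nonexistence part by invoking the exhaustive generation algorithm of Section~\ref{subsect:generation_algo}: running Algorithm~\ref{algo:init-algo} for each $n \le 16$ produces an empty list $\mathcal H$, certifying that no almost hypohamiltonian graph of order at most~$16$ exists. Because the second author's~\cite[Theorem~2]{Za15} already supplies the existence of an almost hypohamiltonian graph on~$17$ vertices (indeed one of girth~$5$, since a girth-$5$ example on $17$ vertices is the smallest possible), this settles the boundary case and simultaneously gives the girth-$5$ minimum order of~$17$.

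Next I would establish existence for all $n \ge 17$. The natural strategy is to produce a small finite family of base graphs covering a complete residue system, and then an operation that increases the order by a fixed amount while preserving almost hypohamiltonicity. Analogously to how the dot product in Theorem~\ref{product} is iterated to manufacture infinite families, I expect the authors to use the computer-generated complete (or sampled) sets recorded in Table~\ref{table:number_of_almhypoham_graphs} to obtain explicit graphs of several consecutive small orders, and then an order-increasing gadget---for instance inserting a suitable subgraph or performing a local vertex expansion that adds a controlled number of vertices---to reach all larger orders. Concretely, if one can exhibit almost hypohamiltonian graphs of orders $17, 18, \dots, 17+r-1$ and an operation adding exactly~$r$ vertices that preserves the property, induction closes the gap. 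I would verify the preservation of almost hypohamiltonicity for the gadget by checking three things: that the enlarged graph remains non-hamiltonian, that removing the (inherited) exceptional vertex still yields a non-hamiltonian graph, and that removing any other vertex yields a hamiltonian graph---the last point typically reducing to extending hamiltonian cycles of the base graph through the inserted structure.

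For the girth-$4$ statement I would argue that no almost hypohamiltonian graph of girth~$4$ exists on fewer than~$18$ vertices, while one does on~$18$; here the already-established fact that the smallest almost hypohamiltonian graph overall has order~$17$ and is of girth~$5$ means the girth-$4$ minimum must be strictly larger, and the generation algorithm restricted to girth~$\ge 4$ (equivalently, triangle-free) inputs certifies~$18$ as the exact value by exhibiting an example of order~$18$ and ruling out order~$17$. The girth-$5$ minimum of~$17$ follows from the same generation data together with the observation that $17$ is already the global minimum.

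The main obstacle I anticipate is the existence half for all large~$n$: reconciling the computationally determined small cases with a clean, verifiable order-increasing construction that provably preserves almost hypohamiltonicity across every residue class. The delicate point is tracking the exceptional vertex $w$ through the operation and confirming that $G - w$ stays non-hamiltonian while $G - v$ becomes (or remains) hamiltonian for all $v \ne w$; hamiltonicity arguments of this kind are easy to state but require careful case analysis on how cycles route through the newly added vertices, and one must ensure the gadget never accidentally creates a hamiltonian cycle in $G$ itself or a second exceptional vertex.
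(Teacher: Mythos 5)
Your proposal matches the paper's (very terse) argument: the nonexistence for $n \le 16$ and the girth-4/girth-5 minimum orders come from the exhaustive generation data of Section~\ref{subsect:generation_algo} (Table~\ref{table:number_of_almhypoham_graphs}), and existence for all $n \ge 17$ follows by combining the computer-found examples of small orders---including the order-24 girth-5 sample the paper notes was essential---with the second author's~\cite[Theorem~2]{Za15}, which supplies the larger orders exactly via the kind of base-cases-plus-order-increasing-operation scheme you describe. The only difference is that you reconstruct from scratch the induction that the paper simply outsources to the cited theorem.
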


Hypohamiltonian graphs of order $n$ exist if and only if $n \in \{ 10, 13, 15, 16 \}$ or $n \ge 18$, see~\cite{AMW97}. The smallest hypohamiltonian graph of girth~4 (girth~5) has order~18 (order~10); for details we refer to~\cite{GZ}. No almost hypohamiltonian graphs of girths other than 4 or 5 are known, but it seems reasonable to believe that adapting a technique of Thomassen~\cite{Th74-2} will provide examples of girth~3. We know of the existence of hypohamiltonian graphs of girth~$g$ for every $g \in \{ 3,...,7\}$, see~\cite{GZ}, while no hypohamiltonian graphs of girth greater than~7 are known. (The smallest hypohamiltonian graph of girth~7 is Coxeter's graph.) In Figure~\ref{fig:almhypo_17} the two smallest almost hypohamiltonian graphs (each of order~17) are depicted---both have girth~5. Figure~\ref{fig:almhypo_18_g4} shows the smallest almost hypohamiltonian graph of girth~4 (which has order~18).

All graphs from Table~\ref{table:number_of_almhypoham_graphs} can be
downloaded from the \textit{House of Graphs}~\cite{hog} at
\url{http://hog.grinvin.org/AlmostHypohamiltonian} and also be inspected in the database of interesting graphs by searching for the keywords ``almost hypohamiltonian''.

\begin{table}[htb!]
\centering
\small
	\begin{tabular}{|c || c | c | c | c |}
		\hline
		Order & \# alm hypoham. & $g \geq 4$ & $g \geq 5$ & $g \geq 6$\\
		\hline
$0-16$  &  0  &  0  &  0  &  0  \\
17  &  2 {\small(1)}  &  2 (1)  &  2 (1)  &  0  \\
18  &  2 (1) &  2 (1)  &  1 (1)  &  0  \\
19  &  ?  &  27 (11)  &  4 (4)  &  0  \\
20  &  ?  &  ?  &  14 (6)  &  0  \\
21  &  ?  &  ?  &  27 (18) &  0 \\
22  &  ?  &  ?  &  133 (86) &  0  \\
23  &  ?  &  ?  &  404 (161)  &  0  \\
24  &  ?  &  ?  &  $\ge 68$  &  0   \\
25  &  ?  &  ?  &  ?  &  0   \\
26  &  ?  &  ?  &  ?  &  0   \\
		\hline
	\end{tabular}
	\caption{The number of almost hypohamiltonian graphs. The columns with a header of the form $g \geq k$ contain the number of almost hypohamiltonian graphs with girth at least~$k$. In parentheses the number of almost hypohamiltonian graphs which have a cubic exceptional vertex are given.}
	\label{table:number_of_almhypoham_graphs}
\end{table}

\begin{figure}[h!t]
	\centering
	\includegraphics[width=0.4\textwidth]{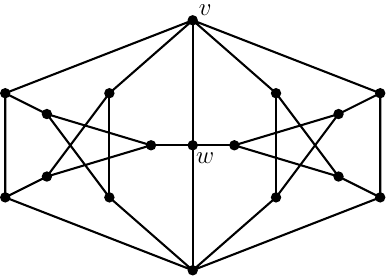}
	\caption{The almost hypohamiltonian graph of order~17 from~\cite{Za15}. Its exceptional vertex is~$w$. By deleting the edge $vw$, we obtain the smallest almost hypohamiltonian graph both in terms of order and size.}
	\label{fig:almhypo_17}
\end{figure}

\begin{figure}[h!t]
	\centering
	\includegraphics[trim = 0mm 0mm 22mm 16mm, clip,width=0.5\textwidth]{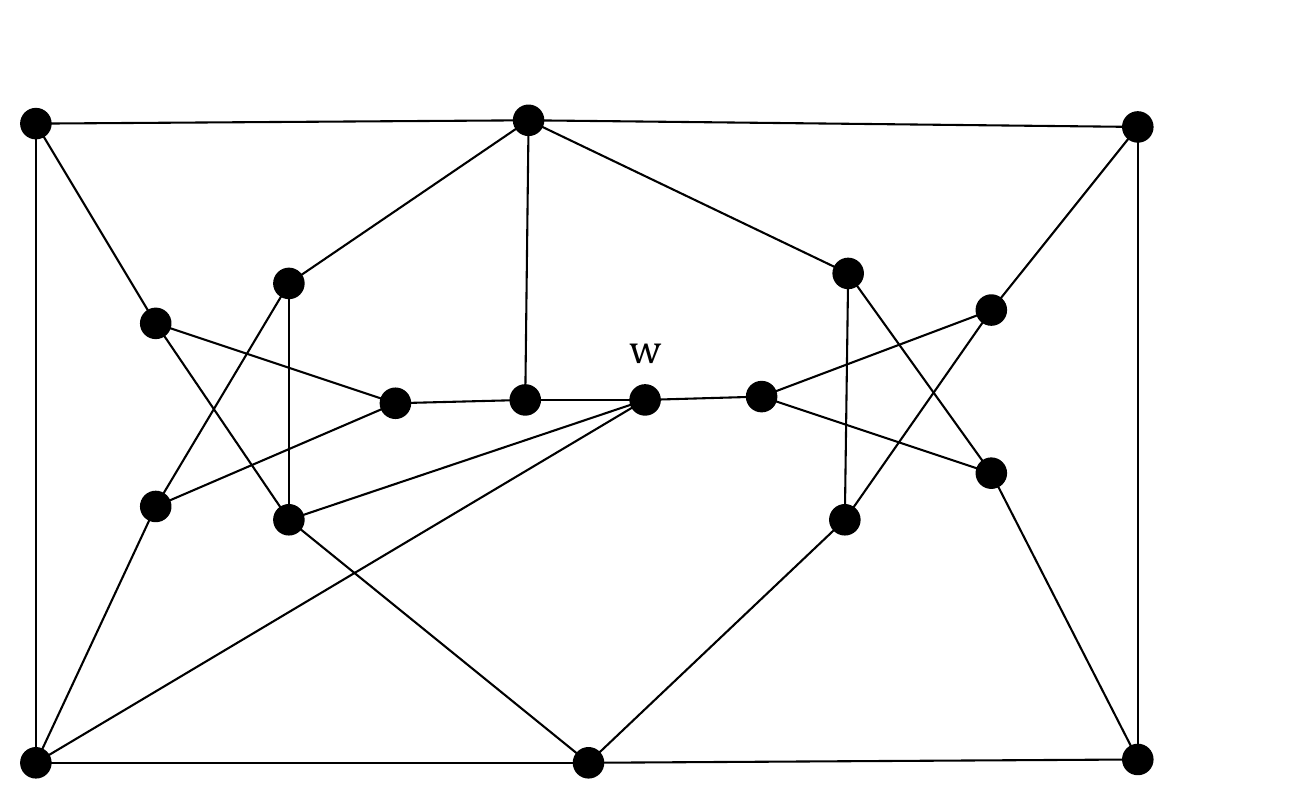}
	\caption{The smallest almost hypohamiltonian graph of girth~4. It has 18 vertices. Its exceptional vertex is~$w$.}
	\label{fig:almhypo_18_g4}
\end{figure}

Our generator for almost hypohamiltonian graphs was obtained by extending the code of our generator for hypohamiltonian graphs from~\cite{GZ}. In that paper we describe how we extensively tested the correctness of our implementation. We also used multiple independent programs to test hamiltonicity and almost hypohamiltonicity---one of those programs was kindly provided to us by Gunnar Brinkmann---and in each case the results were in complete agreement. Furthermore, the source code of our new generator can be downloaded and inspected at~\cite{genhypo-site}.

\subsection{The cubic case}
\label{subsect:cubic_case}

The algorithm described in Section~\ref{subsect:generation_algo} can also be used to only generate cubic almost hypohamiltonian graphs, but it is much more efficient to use a generator for cubic graphs and test which graphs are almost hypohamiltonian as a filter.

We used the program \textit{snarkhunter}~\cite{brinkmann_11,BG15} to generate all cubic graphs with girth at least~4 up to 32~vertices and tested them for almost hypohamiltonicity. (Note that by Lemma~\ref{lemma:no_deg3_triangle} cubic almost hypohamiltonian graphs must have girth at least~4.) The results are presented in Table~\ref{table:counts_almhypo_cubic}. The smallest cubic almost hypohamiltonian graphs of girth~4 and~5 are shown in Figure~\ref{fig:cubic_almhypo}.

We also used the complete lists of snarks from~\cite{snark-paper} to determine all almost hypohamiltonian snarks up to 36~vertices. A \textit{snark} is a cyclically 4-edge-connected cubic graph with girth at least~5 which is not 3-edge-colourable. The number of almost hypohamiltonian snarks is listed in the last column of Table~\ref{table:counts_almhypo_cubic}. The sudden increase at 34~vertices is striking and also occurs for hypohamiltonian snarks as described in~\cite{snark-paper}.

%\begin{figure}[h!t]
%    \centering
 %   \subfloat[]{\label{fig:cubic_almhypo_26_g5}\includegraphics[trim = 0mm 0mm 22mm 16mm, clip, width=0.4\textwidth]{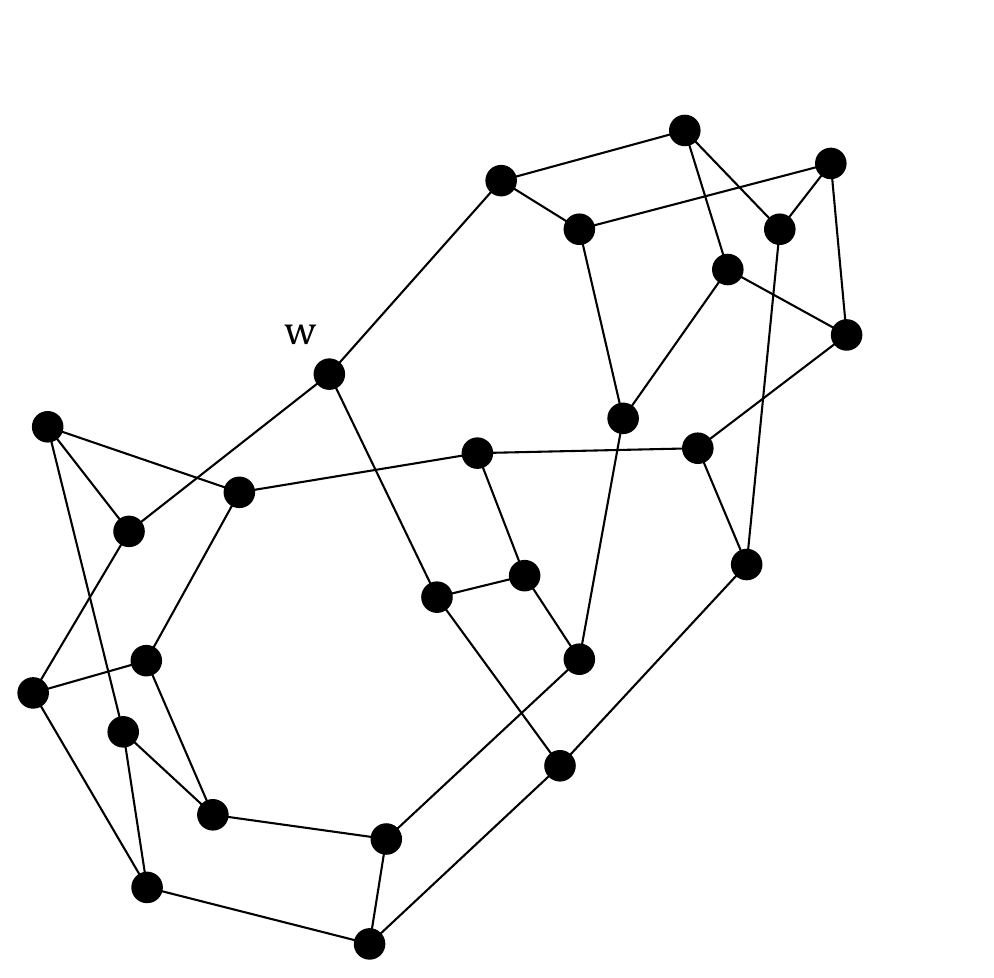}} \qquad
%    \subfloat[]{\label{fig:cubic_almhypo_28_g4}\includegraphics[trim = 0mm 0mm 22mm 16mm, clip, width=0.4\textwidth]{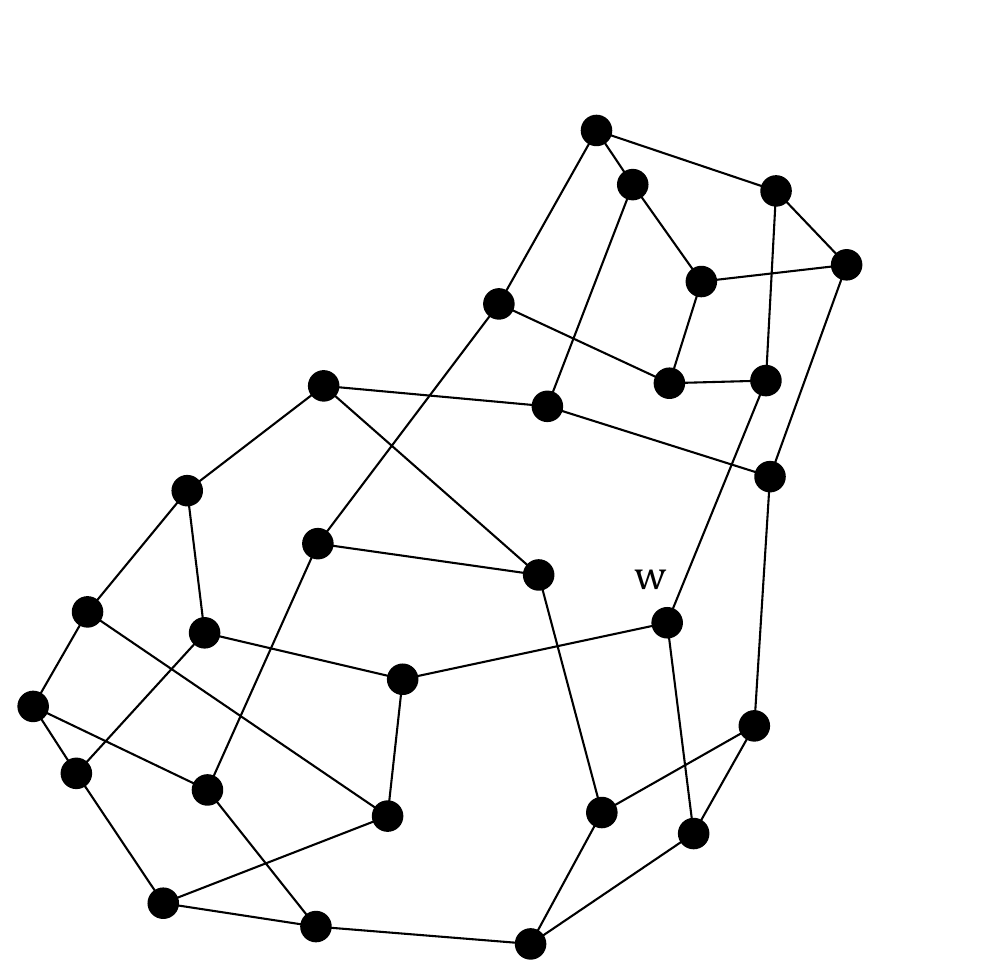}}
%    \caption{Figure~\ref{fig:cubic_almhypo_26_g5} depicts one of the ten smallest cubic almost hypohamiltonian graphs of girth~5. It has 26~vertices. Figure~\ref{fig:cubic_almhypo_28_g4} shows one of the four smallest cubic almost hypohamiltonian graphs of girth~4. It has 28~vertices. In both figures the exceptional vertex is labelled $w$.}
%    \label{fig:cubic_almhypo}
%\end{figure}

\begin{figure}[h!t]


    \centering
    \includegraphics[trim = 0mm 0mm 22mm 16mm, clip, width=0.35\textwidth]{almost_hypoham-cubic-26v-1-girth5.pdf}
    \qquad \qquad
	\includegraphics[trim = 0mm 0mm 22mm 16mm, clip, width=0.35\textwidth]{almost_hypoham-cubic-28v-1-girth5.pdf}
    \caption{The left-hand figure depicts one of the ten smallest cubic almost hypohamiltonian graphs of girth~5. It has 26~vertices. The right-hand figure shows one of the four smallest cubic almost hypohamiltonian graphs of girth~4. It has 28~vertices. In both figures the exceptional vertex is labelled $w$.}
    \label{fig:cubic_almhypo}
\end{figure}

All almost hypohamiltonian graphs from Table~\ref{table:counts_almhypo_cubic} can be downloaded from the \textit{House of Graphs}~\cite{hog} at
\url{http://hog.grinvin.org/AlmostHypohamiltonian} and also be inspected in the database of interesting graphs by searching for the keywords ``almost hypohamiltonian''.

Using \textit{snarkhunter} we also verified that there are no cubic almost hypohamiltonian graphs with girth 6 up to at least 36 vertices.

By combining the results from Table~\ref{table:counts_almhypo_cubic} with the results on non-hamiltonian cubic graphs from~\cite{GZ} we obtain the following bounds for cubic almost hypohamiltonian graphs.

\begin{theorem}
Let $c_g$ denote the order of the smallest cubic almost hypohamiltonian graph with girth~$g$. We have $$c_4 = 28, \quad c_5 = 26, \quad c_6 \ge 38, \quad c_7 \ge 44, \quad c_8 \ge 50, \quad {\rm {\it and}} \quad c_9 \ge 66.$$
\end{theorem}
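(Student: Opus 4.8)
The plan is to split the statement into the two exact values $c_4 = 28$, $c_5 = 26$ and the four lower bounds $c_6 \ge 38$, $c_7 \ge 44$, $c_8 \ge 50$, $c_9 \ge 66$, and to treat these two parts by entirely different means. Throughout I would use two structural facts established earlier: a cubic almost hypohamiltonian graph is in particular non-hamiltonian, and by Lemma~\ref{lemma:no_deg3_triangle} it cannot contain a triangle through a cubic vertex, so (being cubic) its girth is at least~$4$. I would also repeatedly invoke the elementary parity constraint that a cubic graph has an even number of vertices.

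For the two exact values I would argue that both follow directly from the exhaustive generation recorded in Table~\ref{table:counts_almhypo_cubic}. Using \textit{snarkhunter} one generates \emph{every} cubic graph of girth at least~$4$ up to $32$ vertices and filters for almost hypohamiltonicity; since $32 > 28$ and $32 > 26$, this search is certified complete in the relevant range. The upper bounds are witnessed by the explicit graphs in Figure~\ref{fig:cubic_almhypo} (orders $28$ and $26$), while the matching lower bounds are precisely the assertion that the filtered lists contain no smaller graph of the respective girth. Hence $c_4 = 28$ and $c_5 = 26$.

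For the four lower bounds the key move is that exhaustive generation of almost hypohamiltonian graphs is infeasible at orders $44$ through $66$, so I would instead \emph{weaken} the hypothesis: any cubic almost hypohamiltonian graph of girth~$g$ is a non-hamiltonian cubic graph of girth~$g$, whence $c_g$ is bounded below by the order of the smallest non-hamiltonian cubic graph of girth~$g$. For $g \in \{ 7, 8, 9 \}$ I would simply import the corresponding lower bounds on non-hamiltonian cubic graphs from~\cite{GZ}, which already exceed the range accessible to the direct search and yield $44$, $50$, and $66$ respectively. For $g = 6$ the non-hamiltonicity threshold need not by itself reach $38$; here I would combine the \textit{snarkhunter} verification that no cubic almost hypohamiltonian graph of girth~$6$ exists up to (at least) $36$ vertices with the parity constraint---since order $37$ is odd and therefore forbidden for a cubic graph, the smallest possible order jumps from more than $36$ to $38$.

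The main obstacle is entirely contained in the lower bounds for the larger girths: these rest on the non-hamiltonicity thresholds for cubic graphs of prescribed girth, whose proofs (a mixture of heavy computation and girth-specific structural arguments) live in~\cite{GZ} rather than here, so in the present write-up they are used as a black box. The delicate point to get right is the bookkeeping: for each $g$ one must confirm that the imported threshold from~\cite{GZ} really dominates whatever the direct filtering in Table~\ref{table:counts_almhypo_cubic} provides, and---in the single borderline case $g = 6$---that the even-order parity argument supplies the final increment from $37$ to $38$.
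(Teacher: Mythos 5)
Your treatment of the exact values $c_4 = 28$ and $c_5 = 26$ (exhaustive \textit{snarkhunter} generation up to $32$ vertices plus the witnesses of Figure~\ref{fig:cubic_almhypo}), of the girth-$6$ bound via the direct verification up to $36$ vertices combined with the even-order parity of cubic graphs, and of girths $8$ and $9$ via imported thresholds from~\cite{GZ} all match the paper's (very terse) justification. However, there is a genuine gap in your argument for $c_7 \ge 44$. You reduce to ``the order of the smallest non-hamiltonian cubic graph of girth~$7$'', but that quantity is at most $28$: the Coxeter graph is a non-hamiltonian (indeed hypohamiltonian) cubic graph of girth~$7$ on $28$ vertices, as the paper itself records when it states $c'_7 = 28$ and names Coxeter's graph as the smallest hypohamiltonian graph of girth~$7$. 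So weakening ``almost hypohamiltonian'' to ``non-hamiltonian'' cannot possibly yield the threshold $44$ for girth~$7$, and your own bookkeeping caveat does not catch this, because the problem is not that the imported threshold fails to dominate the direct search---it is that the imported threshold, correctly stated, is $28$.

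What is actually needed for $g = 7$ is the stronger enumeration result behind the bounds in~\cite{GZ}: the \emph{only} non-hamiltonian cubic graph of girth at least~$7$ on fewer than $44$ vertices is the Coxeter graph. One must then exclude it explicitly by observing that, being hypohamiltonian, every one of its vertex-deleted subgraphs is hamiltonian, so it has no exceptional vertex and is not almost hypohamiltonian; only then does $c_7 \ge 44$ follow. (For girths $8$ and $9$ the results of~\cite{GZ} really do assert that no non-hamiltonian cubic graph of the given girth exists below $50$, respectively $66$, vertices, so your reduction is sound there.)
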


In comparison, if we denote by $c'_g$ the order of the smallest cubic hypohamiltonian graph with girth~$g$, we have the following. (See \cite{GZ} for more details.)
$$c'_4 = 24, \quad c'_5 = 10, \quad c'_6 = c'_7 = 28, \quad c'_8 \ge 50, \quad {\rm and} \quad c'_9 \ge 66.$$

%\begin{itemize}
%\item The smallest cubic almost hypohamiltonian graph with girth 7 has at least 44 vertices.
%\item The smallest cubic almost hypohamiltonian graph with girth 8 has at least 50 vertices.
%\item The smallest cubic almost hypohamiltonian graph with girth 9 has at least 66 vertices.
%\end{itemize}

\begin{table}[htb!]
\centering
%\small
\footnotesize
	\begin{tabular}{|c || r | r | r | r | r || r |}
		\hline
		\multirow{2}{*}{Order} & \multirow{2}{*}{$g \ge 4$} & Non-ham. & Alm. & Alm.\ hypo. & Alm.\ hypo. & Alm.\ hypo.\\		
		 &  & and $g \ge 4$ & hypo. & and $g \ge 5$ & and $g \ge 6$ & snark\\				
%		 \# hypoham.\ $g \ge 5$ & Time (s) & Increase & Max edges added\\
		\hline
6  &  1  &  0  &  0  &  0  &  0 & 0 \\
8  &  2  &  0  &  0  &  0  &  0 & 0 \\
10  &  6  &  1  &  0  &  0  &  0 & 0 \\
12  &  22  &  0  &  0  &  0  &  0 & 0\\
14  &  110  &  2  &  0  &  0  &  0 & 0\\
16  &  792  &  8  &  0  &  0  &  0 & 0\\
18  &  7 805  &  59  &  0  &  0  &  0 & 0\\
20  &  97 546  &  425  &  0  &  0  &  0  & 0\\
22  &  1 435 720  &  3 862  &  0  &  0  &  0 & 0 \\
24  &  23 780 814  &  41 293  &  0  &  0  &  0 & 0 \\
26  &  432 757 568  &  518 159  &  10  &  10  &  0 & 10 \\
28  &  8 542 471 494  &  7 398 734  &  6  &  2  &  0 & 2 \\
30  &  181 492 137 812  &  117 963 348  &  25  &  12  &  0 & 11 \\
32  &  4 127 077 143 862  &  2 069 516 990  &  74  &  4  &  0 & 0 \\
34  &  ?  &  ?  &  ?  &  ?  &  ? & 6 253 \\
36  &  ?  &  ?  &  ?  &  ?  &  ? & 2 243 \\
		\hline
	\end{tabular}

\caption{Counts of almost hypohamiltonian graphs among cubic graphs. $g$ stands for girth. Note that due to Lemma~\ref{lemma:no_deg3_triangle}, the girth of a cubic almost hypohamiltonian graph is at least~4. The last column denotes the number of almost hypohamiltonian snarks.}
\label{table:counts_almhypo_cubic}

\end{table}

Steffen~\cite{St98} showed that cubic hypohamiltonian graphs with chromatic index~4 are bicritical, while Nedela and \v{S}koviera~\cite{NS96} proved that cubic bicritical graphs are necessarily cyclically 4-edge-connected and have girth at least~5. Thus, cubic hypohamiltonian graphs with chromatic index~4 are snarks. (Not every cubic hypohamiltonian graph is a snark, since planar cubic hypohamiltonian graphs exist, as shown by Thomassen~\cite{Th81}, and the Four Colour Theorem is equivalent to the statement that no snark is planar.) As Nedela and \v{S}koviera~\cite{NS96} observe, since removing a single vertex from a cubic graph with no 3-edge-colouring cannot yield a 3-edge-colourable graph, hypohamiltonian snarks lie on the border between cubic graphs that are 3-edge-colourable and those that are not. The same holds in fact for almost hypohamiltonian snarks, as well: complementing a result of Steffen~\cite{St98} and using his proof idea, one can show the following.

\begin{proposition}
Every almost hypohamiltonian snark is bicritical.
\end{proposition}

%\begin{proof}
%Let $G$ be an almost hypohamiltonian snark and consider distinct vertices $v,w \in V(G)$. If both vertices are non-exceptional, we follow exactly Steffen's argument from~\cite{St98}. Assume that one of the vertices, say $w$, is exceptional, i.e.\ $G - w$ is non-hamiltonian. Then remove $v$ from $G$ and obtain a hamiltonian cycle in $G - v$. But then $G - v,w$ certainly contains a hamiltonian path ${\frak p}$, whose edges can be coloured with two alternating colours. The edges in $E(G - v,w) \setminus E({\frak p})$ then form a matching coloured with a third color, which completes the proof.
%\end{proof}

Thus, by the aforementioned result of Nedela and \v{S}koviera~\cite{NS96}, we obtain that each almost hypohamiltonian cubic graph with chromatic index~4 has girth at least~5 and cyclic edge-connectivity at least~4, and thus is a snark.

\subsection{The planar case}
\label{subsect:planar_case}

Until recently, the smallest known planar almost hypohamiltonian graph had order~39, see~\cite{Za15}. \cite[Problem~3]{Za15} and \cite[Problem~6]{Za16} ask for the smallest order of a planar almost hypohamiltonian graph, and the smallest number $n_0$ such that there exists a planar almost hypohamiltonian graph of order~$n$ for every $n \ge n_0$. Although we are not able to fully settle these questions, we can report progress on both the lower and upper bounds of the order of the smallest planar almost hypohamiltonian graph, as well as an improvement of~$n_0$.

We shall strengthen the second author's~\cite[Theorem 1]{Za15} and \cite[Theorem 4]{Za15}. Although Wiener recently has shown~\cite{Wi} the existence of a planar almost hypohamiltonian graph of order~31 (which constitutes the currently best upper bound for the order of the smallest planar almost hypohamiltonian graph), the planar almost hypohamiltonian graphs we will construct in Theorem~\ref{plalmhypo}---the smallest among them has 36 vertices---have a cubic exceptional vertex, while the exceptional vertex in Wiener's 31-vertex graph has degree~4. Cubic exceptional vertices are of particular interest due to the fact that we can combine two almost hypohamiltonian graphs, each having a cubic exceptional vertex, into a hypohamiltonian graph, see~\cite[Theorem 3]{Za15}---no method is known to do this if the exceptional vertex is non-cubic.

Let us first focus on the lower bound for the order of the smallest planar almost hypohamiltonian graph. Since the algorithm for generating all almost hypohamiltonian graphs presented in Section~\ref{subsect:generation_algo} only adds edges and never removes any vertices or edges, all graphs obtained by the algorithm from a non-planar graph will remain non-planar. So in case we only want to generate planar almost hypohamiltonian graphs, we can prune the construction when a non-planar graph is constructed. We used Boyer and Myrvold's algorithm~\cite{boyer2004cutting} to test if a graph is planar.

Following this approach, we showed that the smallest planar almost hypohamiltonian graph has at least 22 vertices and that the smallest planar almost hypohamiltonian graph of girth~4 must have at least 26 vertices. For girth~5 it turned out to be more efficient to use the program \textit{plantri}~\cite{brinkmann_07} instead. Using \textit{plantri} we generated all planar 3-connected graphs with girth~5 up to 48~vertices and tested them for almost hypohamiltonicity. This yielded the following results.

\begin{theorem}\label{thm:planar}
The smallest planar almost hypohamiltonian graph with girth~$5$ has $44$~vertices. There is exactly one such graph of that order and it is shown in Figure~\ref{fig:planar_almhypo_44}. There are exactly $42$~planar almost hypohamiltonian graphs with girth~$5$ on $47$~vertices. These are the only planar almost hypohamiltonian graphs with girth~$5$ up to $48$ vertices.
\end{theorem}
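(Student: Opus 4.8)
The statement in Theorem~\ref{thm:planar} is an essentially computational result: it asserts the exact structure and count of planar almost hypohamiltonian graphs of girth~5 up to 48 vertices. Accordingly, the plan is to reduce the problem to an exhaustive search over a well-chosen superclass of graphs and then verify almost hypohamiltonicity of each candidate by a direct hamiltonicity test. The crucial preliminary observation, already established in the excerpt, is that any almost hypohamiltonian graph is 3-connected, so I may restrict the search entirely to planar \emph{3-connected} graphs. This is what makes \textit{plantri}~\cite{brinkmann_07} the natural tool: it generates exactly the planar 3-connected graphs, and it can be restricted to a prescribed girth lower bound, here girth~5. First I would run \textit{plantri} to produce, for each order $n \le 48$, the complete list of planar 3-connected graphs of girth at least~5.

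Second, for each generated graph $G$ I would run the almost-hypohamiltonicity filter: test $G$ itself for non-hamiltonicity, and for each vertex $v$ test whether $G-v$ is hamiltonian, counting the number of exceptional vertices $w$ for which $G-w$ is non-hamiltonian. A graph passes iff $G$ is non-hamiltonian and there is exactly one such exceptional vertex. To control correctness, as described elsewhere in the paper for the general generator, I would cross-check the hamiltonicity decisions using at least two independent implementations (including the program kindly supplied by Gunnar Brinkmann) and confirm that they agree on every graph. Tabulating the survivors by order then yields the claimed counts: no graph below order~44, a unique graph at order~44, none at orders~45 and~46, exactly 42 graphs at order~47, and none at order~48. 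The uniqueness at order~44 and the explicit graph of Figure~\ref{fig:planar_almhypo_44} are simply read off from this output.

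The main obstacle is computational feasibility rather than mathematical subtlety: the number of planar 3-connected graphs of girth~5 grows rapidly with $n$, and for each candidate one must solve up to $n+1$ hamiltonicity instances, an \textsf{NP}-hard problem in general. The work therefore hinges on (i) \textit{plantri}'s efficiency in enumerating precisely the girth-5 planar 3-connected graphs without redundancy up to isomorphism, and (ii) a fast hamiltonicity routine, since a naive test would be prohibitive at these orders. The restriction to girth~5 is what keeps the generation tractable up to~48 vertices; attempting the general or girth-4 case over the same range would be far more expensive, which is precisely why the earlier, edge-adding generator of Section~\ref{subsect:generation_algo} (with planarity pruning) was used only to obtain the weaker lower bounds there. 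No single deduction is hard here; the substance of the proof is the verified exhaustiveness of the search combined with the independent confirmation of each hamiltonicity verdict.
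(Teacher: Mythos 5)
Your proposal matches the paper's own approach exactly: the authors use \textit{plantri} to generate all planar 3-connected graphs of girth~5 up to 48 vertices (the restriction to 3-connected graphs being justified by the earlier lemma that almost hypohamiltonian graphs are 3-connected) and then filter for almost hypohamiltonicity, cross-checking the hamiltonicity tests with independent programs. This is correct and essentially identical to what the paper does.
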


\medskip

The graphs from Theorem~\ref{thm:planar} can be downloaded from the database of interesting graphs at the \textit{House of Graphs}~\cite{hog} by searching for the keywords ``planar almost hypohamiltonian''.

\begin{figure}[h!t]
	\centering
	\includegraphics[width=0.4\textwidth]{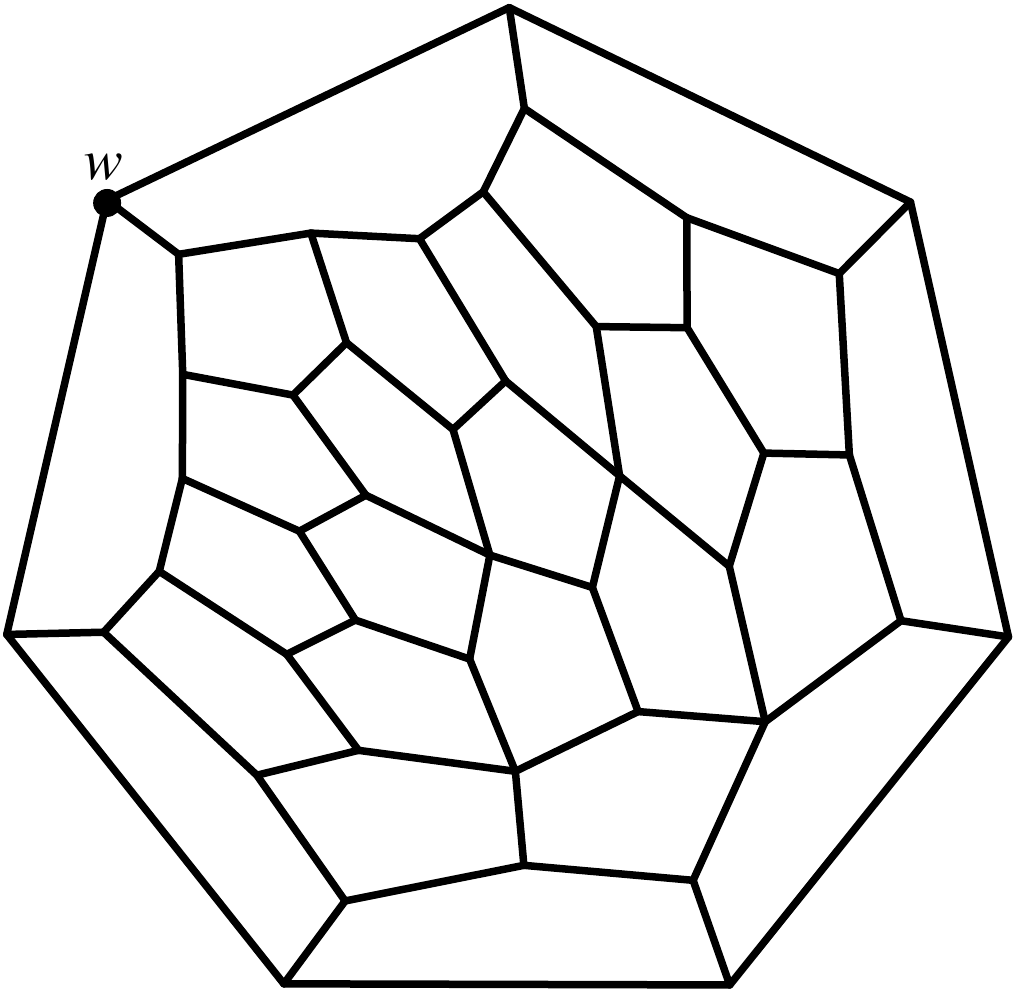}
	\caption{The smallest planar almost hypohamiltonian graph of girth~5. It has 44 vertices. Its exceptional vertex is~$w$.}
	\label{fig:planar_almhypo_44}
\end{figure}

In summary, we have the following lower bounds.

\begin{corollary}
Let $\bar{h}$ ($\bar{h}_g$) denote the order of the smallest planar almost hypohamiltonian graph (of girth~$g$). We have $$\bar{h} \ge 22, \quad \bar{h}_4 \ge 26, \quad {\rm {\it and}} \quad \bar{h}_5 = 44.$$
\end{corollary}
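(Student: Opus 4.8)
The plan is to recognise that this corollary simply collects three bounds, each of which has already been established in the discussion of Section~\ref{subsect:planar_case}, and to assemble them into the displayed inequalities. No new mathematical argument is needed beyond indicating where each piece originates.

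First I would dispatch $\bar{h} \ge 22$ and $\bar{h}_4 \ge 26$. Both follow from the exhaustive generation described earlier: the algorithm of Section~\ref{subsect:generation_algo} is run with the additional pruning rule that discards any intermediate graph which is already non-planar. This pruning is valid precisely because the algorithm only ever adds edges, so a non-planar intermediate graph can never yield a planar output. Carrying the generation to completion below the respective thresholds produces no planar almost hypohamiltonian graph at all, which gives the two lower bounds. For the girth-$4$ statement one additionally prunes whenever a cycle of length less than~$4$ appears, exactly as noted after Algorithm~\ref{algo:construct}.

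Next I would obtain $\bar{h}_5 = 44$ directly from Theorem~\ref{thm:planar}. That theorem records both halves of the equality: it exhibits a planar almost hypohamiltonian graph of girth~$5$ on $44$ vertices (indeed the unique one, shown in Figure~\ref{fig:planar_almhypo_44}), giving $\bar{h}_5 \le 44$, and it certifies via \textit{plantri} that no such graph exists on fewer vertices among all planar $3$-connected graphs of girth~$5$ up to $48$ vertices, giving $\bar{h}_5 \ge 44$. Combining the two yields equality, and combining all three bounds yields the corollary.

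The only genuine difficulty lies upstream rather than in the corollary itself: one must trust that the exhaustive generation is both \emph{sound} (every graph it reports is genuinely planar and almost hypohamiltonian) and \emph{complete} (it overlooks no graph of the relevant order). Soundness is immediate from the final almost-hypohamiltonicity test applied to each candidate, whereas completeness rests on the correctness of the generation scheme inherited from~\cite{GZ}, which was cross-checked against the independent hamiltonicity testers mentioned in Section~\ref{subsect:gen_case}. Granting those guarantees, the corollary follows at once by concatenating the three established bounds.
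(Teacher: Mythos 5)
Your proposal is correct and follows essentially the same route as the paper: the bounds $\bar{h}\ge 22$ and $\bar{h}_4\ge 26$ come from the exhaustive generation algorithm with planarity (and girth) pruning, and $\bar{h}_5=44$ is read off directly from Theorem~3.9 obtained via \textit{plantri}. The only detail worth making explicit is that restricting the \textit{plantri} search to planar $3$-connected graphs is justified by the earlier lemma that every almost hypohamiltonian graph is $3$-connected.
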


Wiener~\cite{Wi} showed that there exists a planar almost hypohamiltonian graph of order~31 and girth~4, so $\bar{h} \le \bar{h}_4 \le 31$. (No planar almost hypohamiltonian graph of girth~3 is known. Adapting a technique of Thomassen~\cite{Th74-2} could provide examples of girth~3, as mentioned earlier.) In comparison, if we denote by $\bar{h}'$ ($\bar{h}'_g$) the order of the smallest planar hypohamiltonian graph (of girth~$g$), we have the following---more details can be found in~\cite{GZ} and, for the girth~3 case, in~\cite{ZZ}. $$23 \le \bar{h}' \le 40, \quad 23 \le \bar{h}'_3 \le 216, \quad 27 \le \bar{h}'_4 \le 40, \quad {\rm and} \quad \bar{h}'_5 = 45.$$

We now prepare for showing that there exists a planar almost hypohamiltonian graph of order~36 whose exceptional vertex is cubic, as well as improving $n_0$ (defined in the first paragraph of this subsection) from 76 (see~\cite{Za15}) to 73. Let $G$ be a graph containing a $4$-cycle $v_1 v_2 v_3 v_4 = C$. We denote by ${\rm Th}(G_C)$ the graph obtained from $G$ by deleting the edges $v_1 v_2$ and $v_3 v_4$ and adding a 4-cycle $v'_1 v'_2 v'_3 v'_4$ disjoint from $G$, and the edges $v_i v'_i$, $1 \le i \le 4$, to $G$. (The operation Th was introduced by Thomassen~\cite{Th81} to show that there exist infinitely many planar cubic hypohamiltonian graphs.) The statement of Lemma~\ref{Thom-lemma} is a slight modification of a claim of Thomassen, see~\cite{Th81}; we skip its proof.

\begin{lemma}[Thomassen~\cite{Th81}]\label{Thom-lemma}
Let $G$ be a plane non-hamiltonian graph containing a quadrilateral face bounded by the cycle $C$. Then ${\rm Th}(G_C)$ is planar and non-hamiltonian.
\end{lemma}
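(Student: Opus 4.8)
The plan is to treat planarity first and then establish non-hamiltonicity by a contradiction argument that converts a hypothetical Hamilton cycle of ${\rm Th}(G_C)$ into one of $G$. For planarity, I would start from a planar embedding of $G$ in which $C = v_1v_2v_3v_4$ bounds the given quadrilateral face $F$, draw the new $4$-cycle $v'_1v'_2v'_3v'_4$ as a small square inside $F$ respecting the same cyclic order, and draw the four ``spokes'' $v_iv'_i$ inside $F$. Since the cyclic orders of the $v_i$ and the $v'_i$ agree, none of these new edges cross; subsequently deleting $v_1v_2$ and $v_3v_4$ cannot destroy planarity. Hence ${\rm Th}(G_C)$ is planar, and this part is routine.

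For non-hamiltonicity, suppose for contradiction that ${\rm Th}(G_C)$ has a Hamilton cycle ${\mathfrak h}$. The crucial observation is that the four spokes $v_1v'_1, v_2v'_2, v_3v'_3, v_4v'_4$ form an edge cut separating the inner vertices $\{v'_1,v'_2,v'_3,v'_4\}$ from $V(G)$, so ${\mathfrak h}$ must cross this cut an even number of times, i.e.\ use $0$, $2$, or $4$ spokes. It cannot use $0$ (the inner vertices would then be disconnected from the rest of ${\mathfrak h}$), so exactly $2$ or $4$ spokes lie in ${\mathfrak h}$. Each inner vertex has degree $3$ (two edges of the inner $4$-cycle plus its spoke), and I would record how ${\mathfrak h}$ threads the gadget in each case. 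With two spokes, the inner part of ${\mathfrak h}$ is a Hamilton path of the inner $4$-cycle, whose end-vertices must be adjacent on that cycle; hence the two spokes sit at consecutive indices and the gadget realizes a single ``virtual'' connection $v_iv_{i+1}$. With four spokes, the inner edges used by ${\mathfrak h}$ form a perfect matching of the inner $4$-cycle, of which there are exactly two, realizing the pair of virtual connections $\{v_1v_2, v_3v_4\}$ or $\{v_2v_3, v_4v_1\}$.

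The key point tying everything together is that every virtual connection arising above, namely each of $v_1v_2$, $v_2v_3$, $v_3v_4$, $v_4v_1$, is an edge of the original cycle $C$ and hence present in $G$. I would therefore delete the inner vertices $v'_1,\dots,v'_4$ from ${\mathfrak h}$ and replace each internal gadget path by the corresponding edge of $C$. Replacing a path by an edge joining its two end-vertices preserves the cyclic structure, so the result is a single cycle spanning exactly $V(G)$, i.e.\ a Hamilton cycle of $G$, contradicting the hypothesis. To make this airtight I would verify that the substituted edges are genuinely available: the restored edges $v_1v_2, v_3v_4$ were deleted in ${\rm Th}(G_C)$ and so cannot already lie in ${\mathfrak h}$, while a retained edge such as $v_2v_3$ cannot lie in ${\mathfrak h}$ simultaneously with an inner $v_2$--$v_3$ path, as that would close a short cycle through the gadget.

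I expect the main obstacle to be the bookkeeping in the two- and four-spoke cases: correctly deducing, from the degree constraints at the inner vertices, which virtual connections each configuration forces (including ruling out non-adjacent spoke pairs, for which no inner Hamilton path exists), and confirming in every subcase that the recovered edges are both present in $G$ and unused by ${\mathfrak h}$, so that the reconstructed object is a single Hamilton cycle rather than a disjoint union of two cycles. Once the observation that all virtual connections are edges of $C$ is in hand, the contradiction follows uniformly across cases.
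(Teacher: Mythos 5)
Your proof is correct. The paper itself skips the proof of this lemma (attributing it to Thomassen~\cite{Th81}), so there is no in-paper argument to compare against; your cut-parity analysis of the four spokes --- showing that every way a hamiltonian cycle of ${\rm Th}(G_C)$ can thread the inserted quadrilateral (two adjacent spokes with an inner Hamilton path, or four spokes with one of the two inner perfect matchings) contracts to one or two edges of $C$ that are present in $G$ and, by your short-cycle observation, not already used --- is the standard and complete argument, and the planarity part is routine as you say.
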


In a plane graph, we call a face \emph{cubic} if all of its vertices are cubic. In~\cite{Th81}, Thomassen also showed that given a plane hypohamiltonian graph $G$ which contains a cubic quadrilateral face bounded by the cycle $C$, we have that ${\rm Th}(G_C)$ is a planar hypohamiltonian graph, as well. We require a modified version of Thomassen's result.

\begin{lemma}[Zamfirescu~\cite{Za15}]\label{almhypo-quad}
Let $G$ be a plane almost hypohamiltonian graph with exceptional vertex $w$, and let $G$ contain a cubic quadrilateral face bounded by the cycle $C$ such that $w \notin V(C)$. Then ${\rm Th}(G_C)$ is a planar almost hypohamiltonian graph, as well.
\end{lemma}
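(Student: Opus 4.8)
The plan is to mirror Thomassen's argument for the hypohamiltonian case, tracking carefully how the single exceptional vertex $w$ behaves under the Th operation. Write $G' = {\rm Th}(G_C)$, where $C = v_1 v_2 v_3 v_4$ bounds a cubic quadrilateral face with $w \notin V(C)$, and let $v'_1 v'_2 v'_3 v'_4$ be the new 4-cycle with edges $v_i v'_i$. Planarity of $G'$ is immediate since Th preserves planarity. The non-hamiltonicity of $G'$ follows directly from Lemma~\ref{Thom-lemma}, since $G$ is non-hamiltonian and $C$ bounds a quadrilateral face. So the real content is to show that $G'$ is almost hypohamiltonian with exceptional vertex $w$: that is, $G' - w$ is non-hamiltonian, while $G' - s$ is hamiltonian for every $s \neq w$.

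The key structural observation I would isolate first is how hamiltonian cycles of $G'$ (and of vertex-deleted subgraphs) restrict to the gadget. The new vertices $v'_1, v'_2, v'_3, v'_4$ are each cubic, and a hamiltonian cycle must traverse the inner square $v'_1 v'_2 v'_3 v'_4$ using its edges together with the spokes $v_i v'_i$ in one of a few combinatorial patterns. I would enumerate these patterns and translate each into a statement about $G$: a hamiltonian cycle through the gadget corresponds either to a hamiltonian cycle of $G$ using a prescribed subset of the edges $v_1v_2, v_2v_3, v_3v_4, v_4v_1$ of the original face, or to a spanning pair of paths in $G$ with endpoints among the $v_i$. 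This correspondence is the engine that lets me transfer hamiltonicity information back and forth between $G$ and $G'$.

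Using this dictionary I would argue the two halves. For $G' - w$ non-hamiltonian: a hamiltonian cycle of $G' - w$ would, via the gadget analysis, yield a hamiltonian cycle of $G - w$ (note $w \notin V(C)$, so $w$ survives as a vertex of $G$ disjoint from the gadget), contradicting that $w$ is the exceptional vertex of $G$. For $G' - s$ hamiltonian with $s \neq w$: I split into the case $s \in V(G) \setminus \{w\}$ and the case $s \in \{v'_1, v'_2, v'_3, v'_4\}$. In the first case, $G - s$ is hamiltonian; I route its hamiltonian cycle through the gadget by rerouting whichever face edges it uses across the inner square via the spokes (here the quadrilateral face being cubic in $G$ guarantees the relevant local edge configurations behave as needed). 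In the second case, say $s = v'_1$, I produce a hamiltonian cycle of $G' - v'_1$ from an appropriate hamiltonian cycle or spanning path-system of $G$, again reading off the construction from the pattern dictionary.

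The main obstacle I anticipate is the case analysis establishing that $G' - s$ is hamiltonian for the original vertices $s \in V(G) \setminus \{w\}$: I must guarantee that the hamiltonian cycle of $G - s$ interacts with the face $C$ in a way that can actually be rerouted through the inner square, and it is precisely the cubicity of the face $C$ in $G$ (built into the hypothesis) that controls which face edges a hamiltonian cycle can use, ruling out awkward configurations. I would handle this by carefully listing, for each way a hamiltonian cycle can meet the four cubic vertices $v_1, \dots, v_4$, the corresponding reroute through $v'_1, \dots, v'_4$, and checking each yields a valid spanning cycle of $G'-s$. Since this is a local modification confined to the gadget, the argument is finite and essentially identical in spirit to Thomassen's original, with the only genuinely new ingredient being the bookkeeping that keeps $w$ as the unique exceptional vertex throughout.
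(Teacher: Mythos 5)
The paper does not actually prove this lemma: it is stated with the attribution ``Zamfirescu~\cite{Za15}'' and imported from that reference without proof (just as Lemma~\ref{Thom-lemma} is stated with its proof skipped), so there is no in-paper argument to compare yours against. Judged on its own merits, your plan is the standard and correct one: planarity and non-hamiltonicity of ${\rm Th}(G_C)$ come from the Th operation and Lemma~\ref{Thom-lemma}; the gadget dictionary (a hamiltonian cycle meets the new square $v'_1v'_2v'_3v'_4$ using either two or four spokes, translating back to a hamiltonian cycle of $G$ or a spanning pair of paths with endpoints on $C$) gives non-hamiltonicity of ${\rm Th}(G_C)-w$ by reduction to $G-w$; and cubicity of $C$ forces every hamiltonian cycle of $G-s$ to use either a perfect matching or a three-edge path of $C$, each of which reroutes through the inner square.

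The one place your sketch is genuinely under-specified is the case $s=v'_i$. You say you would produce the required cycle ``from an appropriate hamiltonian cycle or spanning path-system of $G$,'' but $G$ itself is non-hamiltonian and nothing guarantees a useful spanning path-system of $G$; the pattern dictionary alone does not tell you where the input comes from. The correct source is a hamiltonian cycle of $G-v_j$ for a suitable vertex $v_j\in V(C)$ (e.g.\ for $s=v'_1$ one takes a hamiltonian cycle of $G-v_2$; since $v_1,v_3$ then have degree two it is forced to contain the path $u_1v_1v_4v_3u_3$, which one replaces by $u_1v_1v_4v'_4v'_3v'_2v_2v_3u_3$). This is also the second, and more essential, place where the hypothesis $w\notin V(C)$ is used: it guarantees that $G-v_j$ is hamiltonian for \emph{all four} vertices of $C$, which you need both for $s\in V(C)$ and for $s=v'_i$. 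Your write-up invokes $w\notin V(C)$ only in the $G'-w$ direction, so you should make this second use explicit; with that added, the argument closes.
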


Consider graphs $G$ and $H$, and the cubic vertices $x \in V(G)$ and $y \in V(H)$. Denote by $G_x H_y$ one of the graphs obtained from
$G - x$ and $H - y$ by identifying the vertices in $N(x)$ with those in $N(y)$ using a bijection (so $G_x H_y$ has $|V(G)| + |V(H)| - 5$ vertices). Thomassen~\cite{Th74-1} showed that if $G$ and $H$ are hypohamiltonian, then $G_x H_y$ is hypohamiltonian, as well. Note that if $G$ is hypohamiltonian, then $G$ contains no triangle with a cubic vertex (as shown by Collier and Schmeichel~\cite[p.~196]{CS78}). We will need the following lemma.

\begin{lemma}[Zamfirescu~\cite{Za15}]\label{hypo+almhypo}
Let $G$ be an almost hypohamiltonian graph which contains a cubic vertex $x$ different from the exceptional vertex $w$ of $G$, and $H$ a hypohamiltonian graph which contains a cubic vertex $y$. Then $G_x H_y$ is an almost hypohamiltonian graph with exceptional vertex $w$. If $G$ and $H$ are planar, then so is $G_x H_y$.
\end{lemma}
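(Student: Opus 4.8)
The plan is to construct an explicit Hamiltonian cycle (or prove the absence thereof) in $G_xH_y - z$ for each vertex $z$, by translating cycles in the large graphs $G$ and $H$ into cycles in the glued graph, and conversely. Write $N(x) = \{a_1,a_2,a_3\}$ and $N(y) = \{b_1,b_2,b_3\}$, and let the gluing identify $a_i$ with $b_i$; denote the three identified vertices by $c_1,c_2,c_3$. The central technical observation, which I would establish first, is the following dictionary: a Hamiltonian path in $G-x$ is exactly a Hamiltonian path in $G - x$ joining two of the $a_i$ (equivalently, a Hamiltonian cycle in $G$ passing through $x$ induces, upon deleting $x$, such a path), and similarly for $H-y$; gluing two such paths that share the right pair of endpoints yields a Hamiltonian cycle of $G_xH_y$, while a single path through all three $c_i$ together with a path in the other graph gives the Hamiltonian structure as well. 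I would phrase this as a small ``splicing'' claim and verify it once, so the rest of the proof is bookkeeping.

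With the dictionary in hand, I would verify the three things that define ``almost hypohamiltonian with exceptional vertex $w$'': (i) $G_xH_y$ is itself non-hamiltonian; (ii) $G_xH_y - w$ is non-hamiltonian; and (iii) $G_xH_y - z$ is hamiltonian for every $z \neq w$. For (i) and (ii), suppose $\mathfrak h$ is a Hamiltonian cycle of $G_xH_y$ (resp.\ of $G_xH_y - w$). Restricting $\mathfrak h$ to the $G$-side and the $H$-side splits it into paths meeting the $c_i$; re-inserting $x$ (resp.\ $y$) in $G$ (resp.\ $H$) reconstructs either a Hamiltonian cycle of $G$ or of $G-w$, and of $H$ — this forces one of these ``parent'' graphs to be Hamiltonian where it must not be (both $G$ and $G-w$ are non-hamiltonian since $G$ is almost hypohamiltonian, and $H$ is non-hamiltonian since $H$ is hypohamiltonian), a contradiction. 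The casework here hinges on how $\mathfrak h$ uses the three ``contact'' vertices $c_1,c_2,c_3$: it crosses between the two sides at these vertices, and the number of crossings (which is even) dictates whether each side contributes one path through all three $c_i$ or two paths pairing them up, matching the two regimes of the splicing claim.

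For (iii) I would split on which side $z$ lies. If $z$ is a non-contact vertex of the $G$-side (so $z \in V(G)\setminus(\{x\}\cup N(x))$, and $z \neq w$), then since $G$ is almost hypohamiltonian and $z \neq w$, the graph $G - z$ is Hamiltonian; its Hamiltonian cycle, if it uses $x$, deletes to a path joining two $a_i$, which I splice with a Hamiltonian path of $H-y$ joining the corresponding $b_i$ (this path exists because $H$ is hypohamiltonian, so $H-y$ is Hamiltonian, and its cycle through $y$ deletes to such a path). Symmetrically, if $z$ lies on the $H$-side, I use that $H-z$ is Hamiltonian together with a suitable path in $G-x$, the latter existing because $G-x$ is Hamiltonian ($x \neq w$, as $x$ is a cubic vertex different from $w$). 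The contact vertices $c_i$ require a separate but analogous argument, choosing the pairing of the remaining two contacts so that the two parent cycles can be spliced. Planarity in the combined graph follows because the gluing replaces the cubic vertices $x$ and $y$ by identifying their neighbourhoods, which can be drawn in the plane by placing one deleted-vertex disk into the face vacated by the other.

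\smallskip

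\noindent\emph{Main obstacle.} The delicate point is the splicing itself: a Hamiltonian cycle of $G_xH_y$ need not respect the bipartition into a $G$-path-set and an $H$-path-set unless one checks that every edge of $G_xH_y$ lies wholly on one side or is realised through a shared contact vertex — by construction there are no edges directly between the $G$-interior and the $H$-interior, so the only interaction is at $c_1,c_2,c_3$. I would make precise that the endpoints of the spliced paths on each side must be among $\{a_1,a_2,a_3\}$ (resp.\ $\{b_1,b_2,b_3\}$) and that the pairing of these endpoints is consistent across the two sides; getting the bijection and the parity of crossings to line up is where a careless argument goes wrong, and it is exactly the place where the hypothesis that $x$ (and $y$) are \emph{cubic} is used, since a degree-3 deleted vertex yields precisely the path structures the splicing claim describes.
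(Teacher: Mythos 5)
The paper does not actually prove this lemma---it is quoted from~\cite{Za15}---so your argument can only be judged on its own merits. The overall architecture is the right one: decompose a hamiltonian cycle of $G_x H_y$ at the three contact vertices $c_1,c_2,c_3$ and, conversely, splice one path from each side. Your treatment of the non-hamiltonicity of $G_x H_y$ and of $G_x H_y - w$ is essentially sound, modulo one remark: since each $c_i$ hosts at most one colour change and the number of colour changes along the cycle is even, each side always contributes exactly \emph{one} arc (with the third contact vertex interior to one of the two arcs); your second ``regime'' of two paths per side pairing up the contacts cannot occur.

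The genuine gap is in part (iii). For $z$ an interior vertex of the $G$-side you propose to splice the path $P$ obtained from a hamiltonian cycle of $G-z$ by deleting $x$ (a path with ends $a_i,a_j$ that \emph{contains} $a_k$) with ``a Hamiltonian path of $H-y$ joining the corresponding $b_i$''. Such a path contains $b_k$, so the union covers $c_k$ twice and is not a cycle. The partner path must \emph{avoid} $b_k$: what is needed is a hamiltonian path of $H-y-b_k$ with ends $b_i,b_j$, and its existence comes from hypohamiltonicity of $H$ applied at $b_k$---the graph $H-b_k$ has a hamiltonian cycle, which must traverse the cubic vertex $y$ via $yb_i$ and $yb_j$, and deleting $y$ from it gives exactly the required path. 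Your stated justification (``$H-y$ is Hamiltonian, and its cycle through $y$ deletes to such a path'') is incoherent, since a cycle of $H-y$ cannot pass through $y$, and hamiltonicity of $H-y$ does not yield a hamiltonian path of $H-y$ between two prescribed vertices in any case. The same error recurs with the roles reversed (``a suitable path in $G-x$, the latter existing because $G-x$ is Hamiltonian''): there one needs a hamiltonian cycle of $G-a_k$, not of $G-x$. This is also precisely the point where one must worry whether $a_k=w$, in which case $G-a_k$ is non-hamiltonian; a complete write-up has to address how the exceptional vertex is kept out of the role of the avoided neighbour, and your sketch does not see this difficulty at all.
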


A further ingredient is the following result of Grinberg.

\begin{theorem}[Grinberg's Criterion~\cite{Gr68}]\label{Grinberg}
Given a plane graph with a hamiltonian cycle ${\mathfrak h}$ and exactly $f_i$ ($f'_i$) $i$-gons inside (outside) of ${\mathfrak h}$, we have $$\sum\limits_{i \ge 3} (i-2)(f_i - f'_i) = 0.\qquad (\dagger)$$
\end{theorem}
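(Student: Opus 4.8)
The plan is to use Euler's formula in the disguised form of a double-counting argument, applied separately to the interior and the exterior of the hamiltonian cycle. The hamiltonian cycle ${\mathfrak h}$ is a simple closed curve visiting all $n$ vertices, so by the Jordan curve theorem it separates the plane into an interior and an exterior region. Every edge of the plane graph is then of exactly one of three types: an edge of ${\mathfrak h}$ itself (there are $n$ of these), a chord drawn inside ${\mathfrak h}$, or a chord drawn outside ${\mathfrak h}$. Let $d$ denote the number of interior chords and $d'$ the number of exterior chords. Since the graph is plane, the interior chords are pairwise non-crossing, and likewise the exterior chords.

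First I would establish two counting identities for the interior. The non-crossing interior chords subdivide the $n$-gon bounded by ${\mathfrak h}$ into regions; adding the chords one at a time, each new chord splits one existing region into two, so the number of interior faces is $\sum_{i \ge 3} f_i = d + 1$. Next I would count edge--face incidences inside ${\mathfrak h}$: summing $i$ over all interior $i$-gons counts each interior chord twice (it borders two interior faces) and each edge of ${\mathfrak h}$ once (its other side faces the exterior), giving $\sum_{i \ge 3} i f_i = 2d + n$. Eliminating $d$ between these two relations yields
\begin{equation*}
\sum_{i \ge 3} (i-2) f_i = (2d + n) - 2(d+1) = n - 2.
\end{equation*}

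The exterior is handled by an identical argument applied to the outer region, producing $\sum_{i \ge 3} (i-2) f'_i = n - 2$ as well. Subtracting the two identities gives $\sum_{i \ge 3} (i-2)(f_i - f'_i) = (n-2) - (n-2) = 0$, which is exactly $(\dagger)$.

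The one step requiring genuine care—rather than routine bookkeeping—is the face count $\sum_i f_i = d + 1$, i.e.\ verifying that the $d$ interior chords really do cut the disk into exactly $d+1$ pieces; this is where planarity (non-crossing chords) is essential and is cleanly justified by induction on $d$, each added chord increasing the region count by precisely one. The edge--face incidence identity then hinges on the observation that a cycle edge borders one interior face while a chord borders two, which is immediate once the interior is understood as a subdivided disk. Everything else is symmetric between inside and outside, so the bulk of the work is this single structural count.
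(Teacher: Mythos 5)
Your argument is correct and is the standard proof of Grinberg's identity. The paper itself gives no proof of this theorem --- it is stated as a classical result with a citation to Grinberg's original article and used as a black box in the proof of Theorem~\ref{plalmhypo} --- so there is no internal argument to compare yours against. Your two counting identities are both sound: since the cycle is hamiltonian, every non-cycle edge is a chord with both endpoints on ${\mathfrak h}$, the non-crossing interior chords cut the disk into exactly $d+1$ faces (each chord, at the moment it is added, has its interior inside a single face of the current subdivision and so splits that face in two), and the incidence count $\sum_i i f_i = 2d + n$ correctly reflects that a chord borders two interior faces while a cycle edge borders one. Eliminating $d$ and exploiting the inside/outside symmetry gives $(\dagger)$ exactly as you describe; this is the argument one finds in the literature, so nothing further is needed.
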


\begin{theorem}\label{plalmhypo}
There exists a planar almost hypohamiltonian graph of order $n$ and with cubic exceptional vertex for (i) $n = 36 + 4k$, $k \ge 0$, and (ii) every $n \ge 73$.
\end{theorem}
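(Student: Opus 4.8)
The plan is to produce a single planar almost hypohamiltonian base graph $G_0$ of order~$36$ with a cubic exceptional vertex, and then to reach all required orders by two separate growth mechanisms: the operation ${\rm Th}$ for part~(i), and the vertex-gluing operation $G_x H_y$ together with known planar hypohamiltonian graphs for part~(ii). Concretely, $G_0$ must be planar, almost hypohamiltonian with a \emph{cubic} exceptional vertex~$w$, and must contain a cubic quadrilateral face bounded by a cycle~$C$ with $w \notin V(C)$; it must also possess a cubic vertex $x \ne w$ that is non-adjacent to~$w$. I would exhibit such a graph explicitly (with a figure) and certify its properties: the non-hamiltonicity of $G_0$ via Grinberg's Criterion (Theorem~\ref{Grinberg}), the almost hypocyclic behaviour by displaying suitable hamiltonian cycles of $G_0 - v$ for $v \ne w$, and the non-hamiltonicity of $G_0 - w$ again via Grinberg or by a short direct argument.

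For part~(i), I would iterate Lemma~\ref{almhypo-quad}. Starting from $G_0$, each application of ${\rm Th}$ to a cubic quadrilateral face avoiding~$w$ yields a planar almost hypohamiltonian graph with the same exceptional vertex~$w$, and adds exactly four vertices, so the order grows from $36+4k$ to $36+4(k+1)$. The point that makes the iteration self-sustaining is that the newly inserted inner $4$-cycle $v'_1 v'_2 v'_3 v'_4$ again bounds a cubic quadrilateral face---each $v'_i$ has neighbours $v'_{i-1}, v'_{i+1}, v_i$ and hence degree~$3$---and plainly does not contain~$w$. Thus the hypotheses of Lemma~\ref{almhypo-quad} are restored after every step, and by induction we obtain a planar almost hypohamiltonian graph with cubic exceptional vertex of order $36 + 4k$ for every $k \ge 0$.

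For part~(ii), I would combine $G_0$ with planar hypohamiltonian graphs via Lemma~\ref{hypo+almhypo}. Given $n \ge 73$, set $h = n - 31 \ge 42$ and take a planar hypohamiltonian graph $H$ of order~$h$ with a cubic vertex~$y$; such graphs exist for every $h \ge 42$ (see~\cite{GZ}). Applying Lemma~\ref{hypo+almhypo} with the cubic vertex $x \ne w$ of~$G_0$ produces a planar almost hypohamiltonian graph $G_{0,x} H_y$ with exceptional vertex~$w$ and of order $36 + h - 5 = n$. Since $x \ne w$ and $x$ is non-adjacent to~$w$, the gluing leaves $N(w)$ untouched, so $w$ stays cubic; hence the resulting graph has a cubic exceptional vertex. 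As $h$ ranges over all integers $\ge 42$, the order $n = h + 31$ ranges over all integers $\ge 73$, which is exactly the claimed range---indeed, the threshold~$73$ is forced precisely by the known threshold~$42$ for the existence of planar hypohamiltonian graphs.

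The main obstacle is the base graph $G_0$ itself: everything downstream is a routine, self-propagating application of the two gluing lemmas, but it all rests on producing one concrete $36$-vertex graph and verifying all of its required features at once. The delicate checks are the non-hamiltonicity of $G_0$ and of $G_0 - w$ (best dispatched by Grinberg's Criterion, arranging the face sizes so that $(\dagger)$ admits no solution), together with confirming that $G_0$ is almost---rather than fully---hypohamiltonian while simultaneously carrying a cubic quadrilateral face disjoint from~$w$ and a cubic vertex non-adjacent to~$w$. Once $G_0$ is in hand, parts~(i) and~(ii) follow immediately.
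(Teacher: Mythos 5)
Your overall architecture matches the paper's: a $36$-vertex planar base graph with cubic exceptional vertex, grown by iterating ${\rm Th}$ via Lemma~\ref{almhypo-quad} for part~(i), and glued to planar hypohamiltonian graphs of every order at least $42$ via Lemma~\ref{hypo+almhypo} for part~(ii). Part~(ii) of your argument is sound and in fact slightly more explicit than the paper's, since you record that the cubic vertex of the almost hypohamiltonian factor used for the gluing must be non-adjacent to $w$ in order for $w$ to remain cubic in the product.

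The gap is in part~(i), and it is not merely the (acknowledged) deferral of exhibiting $G_0$: you additionally require $G_0$ to carry a \emph{cubic} quadrilateral face avoiding $w$, so that Lemma~\ref{almhypo-quad} applies already at order~$36$. The paper's $36$-vertex graph $G$ does not have this property---its unique quadrilateral $Q = a'b'c'd'$ has exactly one cubic vertex, and this is no accident: the non-hamiltonicity of $G$ and of $G - w$ is certified by Grinberg's Criterion precisely by exploiting that every face other than $Q$ is a pentagon and that $a'$ is the unique cubic vertex of $Q$. Because Lemma~\ref{almhypo-quad} therefore cannot be applied to $G$, the paper must insert an extra, genuinely nontrivial step: a direct ad hoc proof (the ``Claim'') that $G' = {\rm Th}(G_Q)$---the ${\rm Th}$ operation applied to a \emph{non-cubic} quadrilateral---is still planar and almost hypohamiltonian with exceptional vertex $w$. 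Only $G'$, of order~$40$, possesses the cubic quadrilateral face (the newly inserted $4$-cycle) needed to launch the iteration of Lemma~\ref{almhypo-quad}. Your proposal assumes this difficulty away by postulating a base graph whose existence is not established and which the paper's own example shows is not straightforward to arrange. Unless you can actually produce a $36$-vertex planar almost hypohamiltonian graph with cubic exceptional vertex \emph{and} a cubic quadrilateral face avoiding $w$, or supply an analogue of the paper's Claim for a non-cubic quadrilateral, your induction in part~(i) has no valid step from $n = 36$ to $n = 40$.
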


\begin{proof}
(i) We first show that the graph $G$ from Figure~\ref{fig:planar_alm_G} is almost hypohamiltonian. By Grinberg's Criterion, $G$ is non-hamiltonian, since all faces of $G$ are pentagons with exactly one exception, which is a quadrilateral---now the left-hand side of ($\dagger$) cannot vanish. $G - w$ is non-hamiltonian, as well: denote by $Q = a'b'c'd'$ the quadrilateral in $G$ such that $a'$ is the cubic vertex of $Q$, and let $w$ be the exceptional vertex of $G$, as shown in Figure~\ref{fig:planar_alm_G} (left-hand side). Assume that there exists a hamiltonian cycle ${\mathfrak h}$ in $G - w$. By Grinberg's Criterion, in $G - w$, $Q$ and the 9-gon $N$ must lie on the same side of ${\mathfrak h}$. But this is impossible, as $a' \in V({\mathfrak h})$ and thus, $Q$ and $N$ lie on different sides of ${\mathfrak h}$. Hence, $G - w$ is non-hamiltonian.

\begin{figure}[h!t]
    \centering
%    \subfloat[]{\label{fig:planar_alm_G}\includegraphics[width=0.4\textwidth]{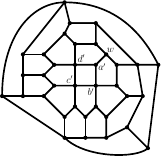}} \qquad
%    \subfloat[]{\label{fig:planar_alm_G_prime}\includegraphics[width=0.4\textwidth]{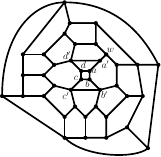}}
\includegraphics[width=0.4\textwidth]{almhypo36.pdf}
\qquad \qquad
\includegraphics[width=0.4\textwidth]{almhypo36+Q.pdf}
    \caption{$G$ and $G'$ (left-hand side and right-hand side, respectively), two planar almost hypohamiltonian graphs used in the proof of Theorem~\ref{plalmhypo}. Their respective cubic exceptional vertices are labelled $w$.}
    \label{fig:planar_alm_G}
\end{figure}

We skip the straightforward verification that for every vertex $v \ne w$, the graph $G - v$ is hamiltonian. Thus, we have shown that there exists a planar almost hypohamiltonian graph of order~36 whose exceptional vertex is cubic. Note that we cannot apply Lemma~\ref{almhypo-quad} to $G$ since $a'b'c'd'$ is not a cubic quadrilateral, thus we require the following claim.

%\smallskip

%\noindent \textbf{Claim}. \emph{The graph $G'$ from Figure~\ref{fig:planar_alm_G} (right) is planar and almost hypohamiltonian with exceptional vertex $w$.}
\begin{claim}\label{claim}
The graph $G'$ from Figure~\ref{fig:planar_alm_G} (right-hand side) is planar and almost hypohamiltonian with exceptional vertex $w$.
\end{claim}

%\begin{figure}[h!t]
%\centering
%\includegraphics[width=0.45\textwidth]{almhypo39.eps}
%\caption{The graph $G'$ used in the proof of Theorem~\ref{plalmhypo}.}
%\label{fig:G_prime}
%\end{figure}

%\smallskip

\noindent \emph{Proof of the Claim.} Since $G' = {\rm Th}(G_Q)$, by Lemma~\ref{Thom-lemma} the graph $G'$ is planar and non-hamiltonian. Let ${\mathfrak h}$ be a hamiltonian cycle in $G' - w$. Certainly, $aa'd' \subset {\mathfrak h}$. If $dd' \in E({\mathfrak h})$, then $b'baa'd'dcc' \subset {\mathfrak h}$, which replaced with $b'a'd'c'$ gives a hamiltonian cycle in $G - w$, a contradiction, so $dd' \notin E({\mathfrak h})$. Hence $a'adcbb' \subset {\mathfrak h}$, which replaced with $a'b'$ gives a hamiltonian cycle in $G - w$, once more a contradiction. %This completes the proof of the claim.

It is quickly verified that there exists a hamiltonian cycle ${\mathfrak h}$ in $G - a'$ containing the path $b'c'd'$. Deleting from ${\mathfrak h}$ the edge $c'd'$ we obtain the path ${\mathfrak p}$ in $G'$. Now ${\mathfrak p} \cup d'dabcc'$ is a hamiltonian cycle in $G' - a'$. Finding hamiltonian cycles in $G' - b'$, $G' - c'$, and $G' - d'$ works in a similar (and in fact even simpler) fashion. ${\mathfrak p} \cup d'a'adcc'b'$ is a hamiltonian cycle in $G' - b$, and ${\mathfrak p} \cup d'a'abcc'b'$ is a hamiltonian cycle in $G' - d$. It is easy to see that there exists a hamiltonian cycle ${\mathfrak h}'$ in $G - d'$ containing the edge $b'a'$. Deleting this edge from ${\mathfrak h}'$ we obtain the path ${\mathfrak p}'$ in $G'$. Now ${\mathfrak p}' \cup b'bcdd'a'$ is a hamiltonian cycle in $G' - a$, and ${\mathfrak p}' \cup b'badd'a'$ is a hamiltonian cycle in $G' - c$.

It remains to show that $G' - v'$ is hamiltonian for every $v' \in V(G') \setminus \{ w, a, a', b, b',$ $c, c', d, d' \}$. Let $v$ be the vertex in $G$ corresponding to $v'$ in $G'$. Let ${\mathfrak h}''$ be a hamiltonian cycle in $G - v$. If $a'b' \in E({\mathfrak h}'')$, replace $a'b'$ with $a'adcbb'$. If $a'b' \notin E({\mathfrak h}'')$, then necessarily $a'd' \in E({\mathfrak h}'')$, which we replace with $a'abcdd'$. In both cases we have obtained a hamiltonian cycle in $G' - v'$, which finishes the proof of Claim~\ref{claim}.

\smallskip

Using Claim~\ref{claim} and iterating Lemma~\ref{almhypo-quad} ad infinitum, the proof of (i) is complete.

\smallskip

(ii) In \cite{JMOPZ}, it was shown that for every $n \ge 42$ there exists a planar hypohamiltonian graph of order $n$, which we will call $H^n$. For arbitrary but fixed $n \ge 42$, let $x \in V(H^n)$ be cubic (Thomassen showed that every planar hypohamiltonian graph contains a cubic vertex~\cite{Th78}), and let $y \in V(G)$ be cubic, where $G$ is the graph shown in Figure~\ref{fig:planar_alm_G} (left-hand side). Applying Lemma~\ref{hypo+almhypo}, we obtain that $H^n_x G_y$ is an almost hypohamiltonian graph with a cubic exceptional vertex.
\end{proof}

%The corollary below follows from theorem~\ref{plalmhypo} and \cite[Theorem 3]{Za15}.

%\begin{corollary}
%There exist planar hypohamiltonian graphs of order $n$ for $n = 70 + 4k$, $k \ge 0$, and every $n \ge 150$.
%\end{corollary}

\medskip

By applying~\cite[Theorem~3]{Za15}, Theorem~\ref{plalmhypo}~(i) yields an alternative proof of the fact that infinitely many planar hypohamiltonian graphs exist. (This was first shown by Thomassen~\cite{Th76} in order to settle a question of Chv\'atal~\cite{Ch73}.)

\subsection{The planar cubic case}
\label{subsect:planar_cubic_case}

Using the program \textit{plantri}~\cite{brinkmann_07} we generated all planar cyclically 4-edge-connected cubic graphs with girth at least~4 up to 52~vertices and tested them for almost hypohamiltonicity. No such graphs were found, so we have (recall that, by Lemma~\ref{lemma:c4c}, almost hypohamiltonian graphs are cyclically 4-edge-connected):

\begin{theorem}
The smallest planar cubic almost hypohamiltonian graph has at least $54$~vertices.
\end{theorem}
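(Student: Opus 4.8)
The plan is to turn this into a finite, exhaustive computer search by first pinning down enough structural constraints to restrict the candidate graphs to a class that \textit{plantri} can enumerate efficiently. The target class is the planar cyclically $4$-edge-connected cubic graphs of girth at least $4$, and the claim will follow once we verify that none of those on at most $52$ vertices is almost hypohamiltonian.

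First I would collect the structural restrictions. Any almost hypohamiltonian graph is cyclically $4$-edge-connected by the earlier lemma on $3$-edge-cuts, so a planar cubic almost hypohamiltonian graph certainly lies in the cyclically $4$-edge-connected class. Next, by Lemma~\ref{lemma:no_deg3_triangle} every vertex of a triangle in an almost hypohamiltonian graph has degree at least $4$; since our graph is cubic, it can contain no triangle, hence its girth is at least $4$. Together these show that every planar cubic almost hypohamiltonian graph is a planar cyclically $4$-edge-connected cubic graph of girth at least $4$, precisely the family \textit{plantri} is built to generate.

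Second, I would run \textit{plantri} to produce all such graphs up to $52$ vertices and apply an almost-hypohamiltonicity filter to each: confirm the graph itself is non-hamiltonian, then check that exactly one vertex $w$ yields a non-hamiltonian $G-w$ while $G-v$ is hamiltonian for every $v \neq w$. As a safeguard against implementation errors, I would cross-check the hamiltonicity routine against an independent program, as was done for the other tables in this paper. The outcome of this search is that no graph in the range passes the filter.

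Finally, I would invoke the parity of cubic orders: the number of vertices of a cubic graph is even, so having excluded all orders up to and including $52$, the next admissible order is $54$, giving the stated lower bound. The main obstacle is purely computational feasibility---the number of planar cyclically $4$-edge-connected cubic graphs grows quickly and each hamiltonicity test is expensive---but the girth and connectivity restrictions prune the search space substantially, and the sizes involved keep the individual hamiltonicity checks tractable, so the enumeration completes.
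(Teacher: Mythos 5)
Your proposal is correct and follows essentially the same route as the paper: the authors likewise justify restricting to planar cyclically $4$-edge-connected cubic graphs of girth at least~$4$ (via the $3$-edge-cut lemma and Lemma~\ref{lemma:no_deg3_triangle}), enumerate them with \textit{plantri} up to $52$~vertices, filter for almost hypohamiltonicity, and conclude by parity of cubic orders. The only difference is that you spell out the structural justification and the parity step explicitly, which the paper leaves implicit.
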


\medskip

%\jg{Can we also give an upper bound (better than 68)? }

With \textit{plantri} we also generated all planar cyclically 4-edge-connected cubic graphs with girth~5 up to 78~vertices. This yielded the following results. These results were independently obtained by McKay (private communication) but were not published.

\begin{theorem}\label{thm:planar_cubic}
The smallest planar cubic almost hypohamiltonian graph of girth~$5$ has $68$~vertices. There are exactly three such graphs of that order and they are shown in Figure~\ref{fig:planar_cubic_g5}. There are exactly $81$~planar cubic almost hypohamiltonian graphs with girth~$5$ on $74$~vertices. These are the only planar cubic almost hypohamiltonian graphs with girth~$5$ up to at least $78$~vertices.
\end{theorem}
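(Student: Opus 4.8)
The plan is to settle this statement by exhaustive computer search, exploiting the strong structural restrictions that an almost hypohamiltonian graph must satisfy so that a complete yet manageable class of candidate graphs can be generated. First I would observe that any planar cubic almost hypohamiltonian graph $G$ of girth~$5$ must be cyclically $4$-edge-connected---this was established earlier for all almost hypohamiltonian graphs---and, by Lemma~\ref{lemma:no_deg3_triangle}, is necessarily triangle-free; here the girth-$5$ condition is imposed directly. Consequently every candidate lies in the class of planar cyclically $4$-edge-connected cubic graphs of girth~$5$, which is precisely the class that \textit{plantri} is designed to enumerate exhaustively (without isomorphic duplicates). This reduction is what makes the search finite and tractable rather than requiring the slower general-purpose generator of Section~\ref{subsect:generation_algo}.

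Next I would run \textit{plantri} to generate all graphs in this class on $n$ vertices for each (necessarily even) order $n$ up to $78$, and pipe each generated graph into a filter that tests for almost hypohamiltonicity. The filter first checks that $G$ is non-hamiltonian; it then computes, for every vertex $v$, whether $G - v$ is hamiltonian, and accepts $G$ exactly when precisely one vertex $w$ fails this test, so that $G - w$ is non-hamiltonian while $G - v$ is hamiltonian for all $v \neq w$. Tallying the accepted graphs by order should then yield the claimed counts: none on fewer than $68$ vertices, exactly three on $68$ vertices (the graphs of Figure~\ref{fig:planar_cubic_g5}), none on $70$ or $72$, exactly $81$ on $74$, and none on $76$ or $78$.

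The main obstacle is computational rather than conceptual: the number of planar cyclically $4$-edge-connected cubic graphs of girth~$5$ grows quickly with $n$, so reaching $78$ vertices demands that generation be efficient, and each almost-hypohamiltonicity test invokes $\Theta(n)$ hamiltonicity checks, which is the bottleneck. To keep the computation feasible I would rely on \textit{plantri}'s native restriction to the required connectivity and girth (generating only this class rather than filtering a larger one afterwards) together with a fast hamiltonicity routine, and I would prune the per-vertex loop as soon as a second vertex $v$ with $G - v$ non-hamiltonian is encountered. To guard against implementation error---the chief risk in any such enumeration---I would re-verify the output using independent hamiltonicity testers, as was done for the other tables in this paper, and confirm directly that the three surviving $68$-vertex graphs are pairwise non-isomorphic and genuinely almost hypohamiltonian.
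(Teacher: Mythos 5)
Your proposal is correct and matches the paper's approach exactly: the paper also uses \textit{plantri} to exhaustively generate all planar cyclically $4$-edge-connected cubic graphs of girth~$5$ up to $78$ vertices (the restriction to cyclic $4$-edge-connectivity being justified by the earlier lemma on $3$-edge-cuts in almost hypohamiltonian graphs) and then filters for almost hypohamiltonicity. The paper gives no further detail beyond this, so your additional remarks on pruning and independent verification are consistent elaborations rather than deviations.
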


\medskip

The graphs from Theorem~\ref{thm:planar_cubic} can be downloaded from the database of interesting graphs at the \textit{House of Graphs}~\cite{hog} by searching for the keywords ``planar cubic almost hypohamiltonian''.

%\jg{Add discussion: smallest planar cubic hypoham has 76 vertices (and also an example on 78 vertices, so no gap).}

\begin{figure}[h!t]
    \centering
    %\subfloat[]{\includegraphics[width=0.28\textwidth]{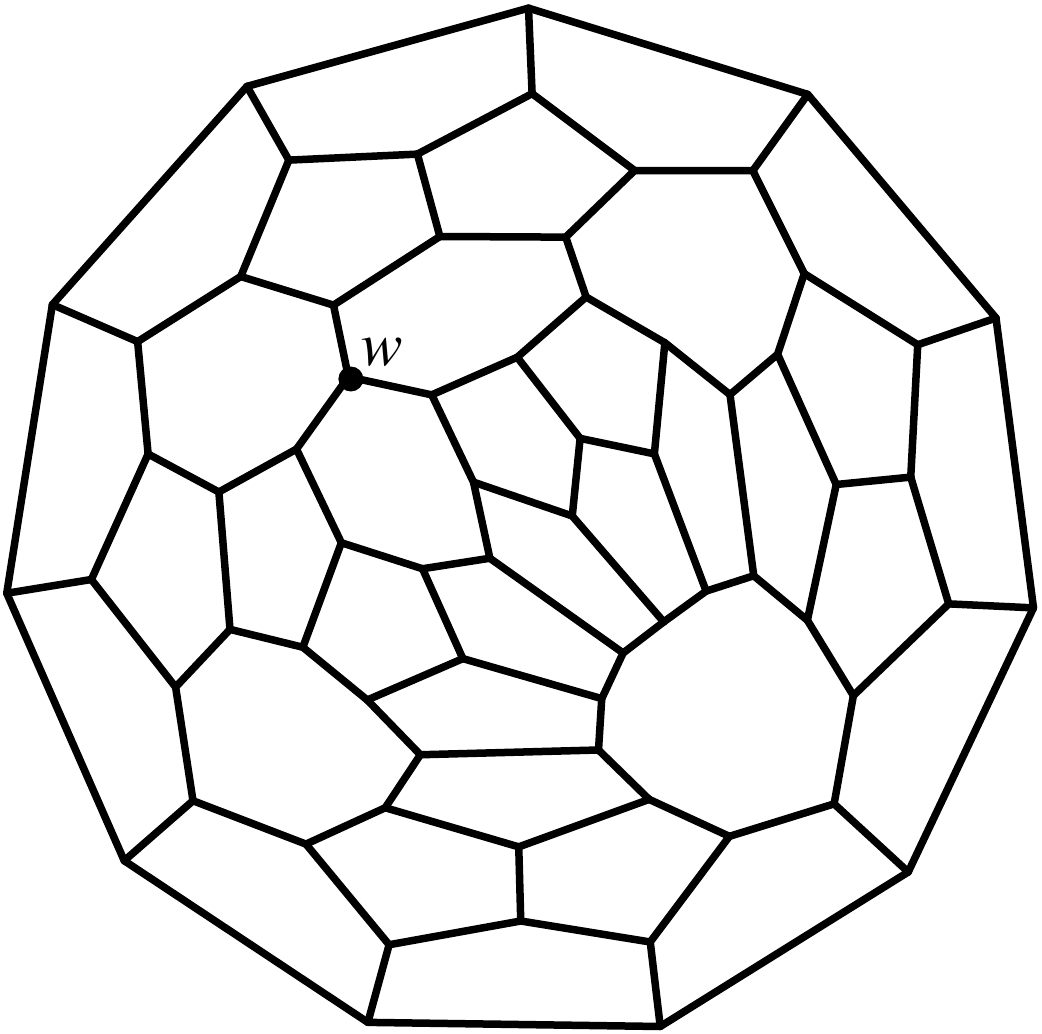}} \quad
    %\subfloat[]{\includegraphics[width=0.28\textwidth]{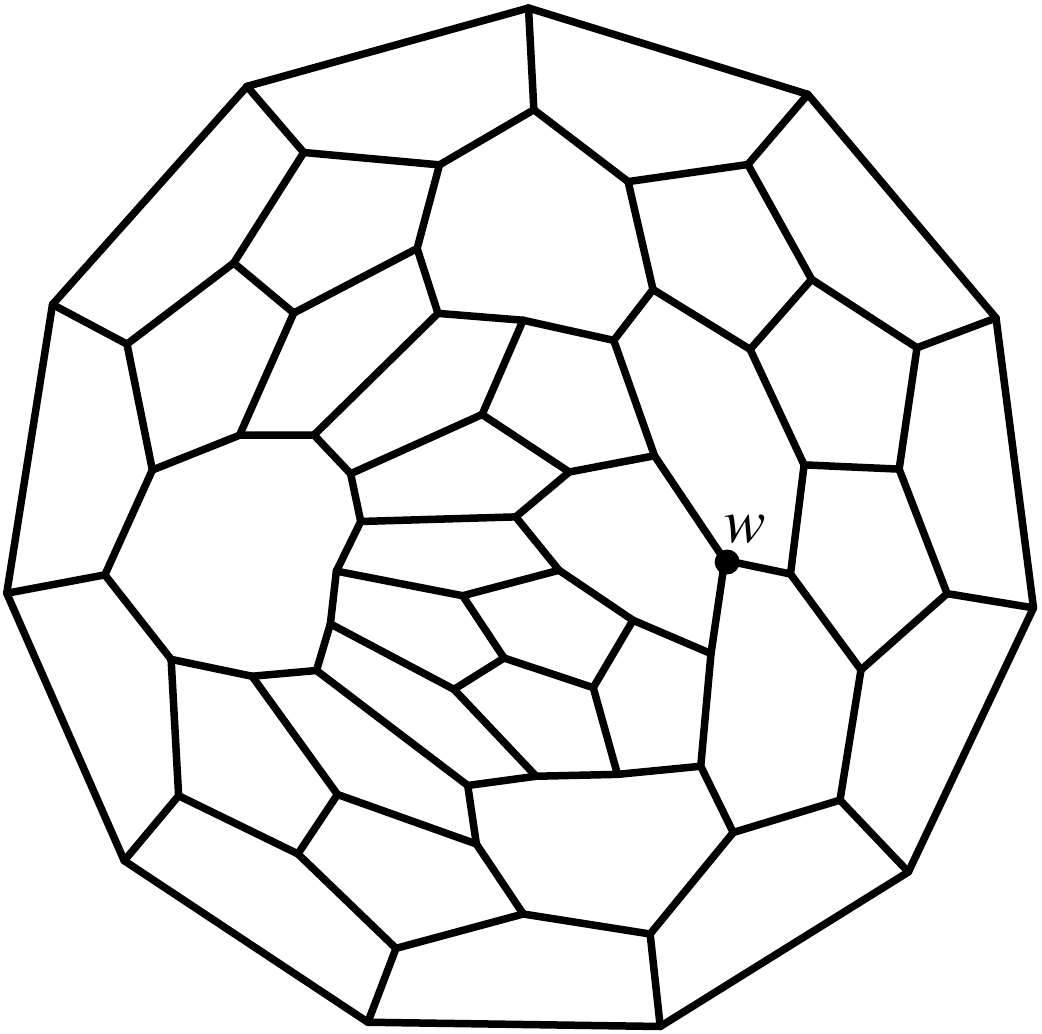}} \quad
    %\subfloat[]{\includegraphics[width=0.28\textwidth]{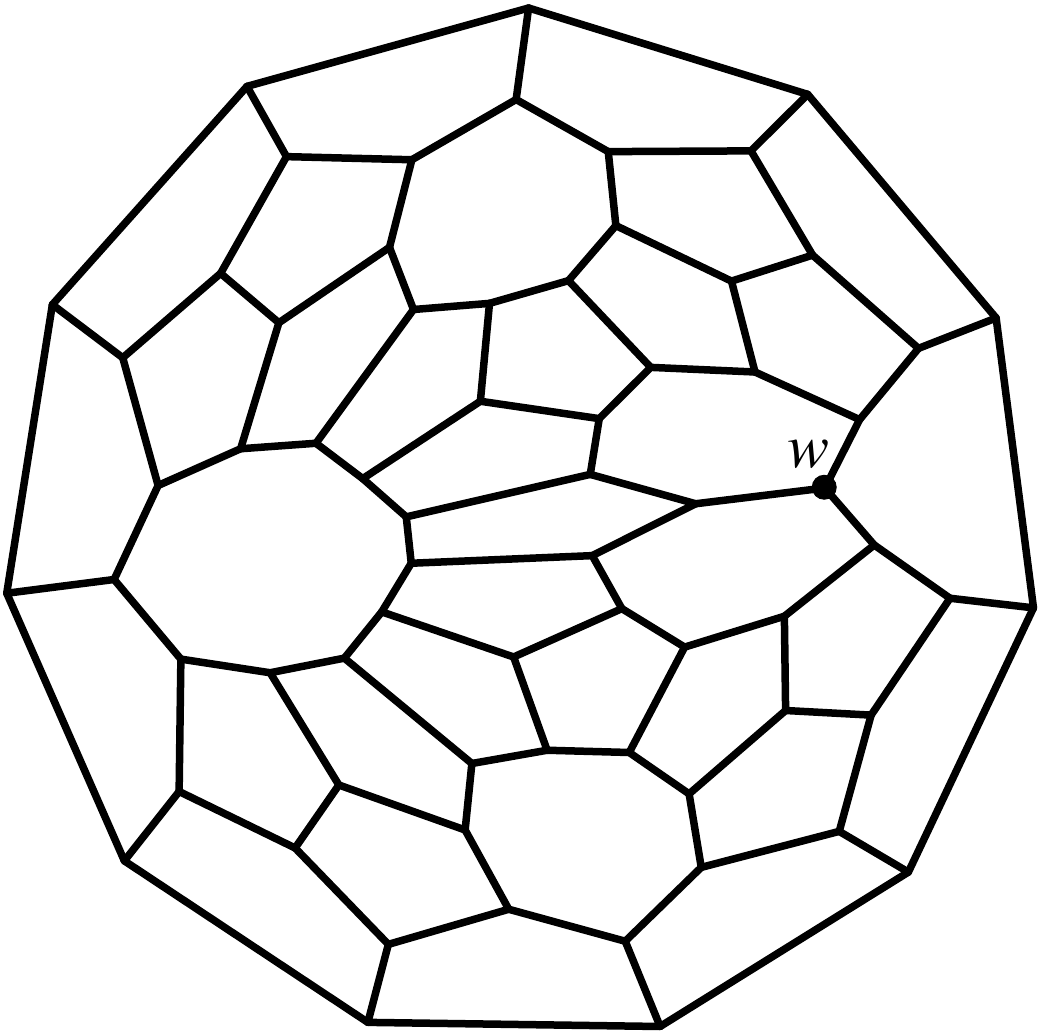}}
    \includegraphics[width=0.29\textwidth]{Almost_hypohamiltonian_graphs_68v-g5-cyc4-1.pdf} \quad
    \includegraphics[width=0.29\textwidth]{Almost_hypohamiltonian_graphs_68v-g5-cyc4-2.pdf} \quad
    \includegraphics[width=0.29\textwidth]{Almost_hypohamiltonian_graphs_68v-g5-cyc4-3.pdf}
    \caption{All three smallest planar cubic almost hypohamiltonian graphs of girth~$5$. Each has 68~vertices. In each figure the exceptional vertex is labelled~$w$.}
    \label{fig:planar_cubic_g5}
\end{figure}

The smallest planar cubic hypohamiltonian graph of girth~5 has order~76, as proven by McKay~\cite{Mc16}. In~\cite{GZ}, we showed that there exists exactly one such graph of order~78. Interestingly, McKay's three extremal graphs of order~76 have trivial automorphism group, whereas our 78-vertex example has $D_{3h}$ symmetry (as an abstract group, this is the dihedral group of order~12). Furthermore, it is noteworthy that despite the fact that the smallest almost hypohamiltonian graph is significantly larger than the smallest hypohamiltonian graph (17 versus 10 vertices), in the planar and planar cubic case, the current bounds strongly indicate that the roles are reversed.

\section{On longest paths and longest cycles}
\label{section:long}

In this section we present an application of our results on almost hypohamiltonian graphs. Motivated by Gallai's classical problem whether in every connected graph there is a vertex lying on all longest paths~\cite{Ga68}, T.~Zamfirescu~\cite{Za76} introduced and studied the following numbers: denote with $C_k^j \left(P_k^j \right)$ the smallest order of a $k$-connected graph in which any $j$ vertices are avoided by some longest cycle (longest path). $\overline{C}_k^j$ and $\overline{P}_k^j$ denote the respective numbers for the planar case. A survey of related results is given in~\cite{SZZ13}---we restrict ourselves here to giving only the following tabular overview.

\begin{table}[htbp]
\centering
\begin{tabular}{c|c c c c c c}
$j,k$ & 1,1 & 1,2 & 1,3 & 2,1 & 2,2 & 2,3\\
\hline
$C_k^j$ & \underline{6} & \underline{10} & \underline{10} & \underline{9} & 75 & 75\\
%\hline
$P_k^j$ & \underline{12} & 26 & 36 & 93 & 270 & 270\\
%\hline
$\overline{C}_k^j$ & \underline{6} & \underline{15} & 40 & \underline{9} & 135 & \textbf{2345}\\
%\hline
$\overline{P}_k^j$ & 17 & 32 & 156 & 308 & 914 & \textbf{9246}\\
%\hline
\end{tabular}
\caption{Upper bounds for $C_k^j$, $P_k^j$, $\overline{C}_k^j$, $\overline{P}_k^j$, where $k \in \{ 1,2,3\}$ and $j \in \{ 1,2 \}$. Underlined entries are optimal. Entries in bold are proven in this article.}
\end{table}

By using an insertion technique of T.~Zamfirescu~\cite{Za76} and the 36-vertex planar almost hypohamiltonian graph which has a cubic exceptional vertex given in Section~\ref{subsect:planar_case} (and found independently by Wiener~\cite{Wi}), Wiener~\cite{Wi} improved the upper bound for the smallest planar hypotraceable graph from 154 (proven in~\cite{JMOPZ}) to 138. Using the same approach, we can prove the following.

\newpage

\begin{theorem}
We have $\overline{C}^2_3 \le 2345$ and $\overline{P}^2_3 \le 9246.$
\end{theorem}

\noindent \emph{Proof.} Let $G$ be the planar almost hypohamiltonian graph on 36~vertices which has a cubic exceptional vertex. For the first inequality, insert $G$ into the 70-vertex planar cubic hypohamiltonian graph $\Gamma$ constructed by Araya and Wiener in~\cite{AW11}. ($\Gamma$ is the smallest known planar cubic hypohamiltonian graph.) This means that each vertex of~$\Gamma$ is replaced by $G$ minus its exceptional vertex, i.e.: consider a vertex $v$ in $\Gamma$ and let $w$ be the (cubic) exceptional vertex of $G$. We then consider the disjoint graphs $\Gamma - v$ and $G - w$ and join by adding three edges the vertices in $N(v)$ and $N(w)$, through a bijection. This is done for all vertices of $\Gamma$. We denote the resulting graph by $G'$.

Araya and Wiener proved~\cite{AW11} (using a computer) that every pair of edges in $\Gamma$ is missed by a longest cycle. Combining this fact with the almost hypohamiltonicity of $G$ and the hypohamiltonicity of $\Gamma$, we obtain that in $G'$ any pair of vertices is avoided by a longest cycle. We do not lose this property if all edges originally belonging to $\Gamma$ are contracted. By construction, the order of $G'$ is $(36 - 1) \cdot 70 = 2450$. Since $|E(\Gamma)| = 105$, after contracting all edges originally belonging to $\Gamma$, we obtain $\overline{C}^2_3 \le 2450 - 105 = 2345$.

For the second inequality, consider the graph $\Gamma$ from above and insert $\Gamma$ into $K_4$ to obtain $H$. Now insert $G$ into $H$. Finally, contract all edges which originally belonged to $H$. Since $|V(H)| = (70 - 1) \cdot 4 = 276$ and $H$ is cubic, we have that $|E(H)| = 414$. Then $\overline{P}^2_3 \le (36 - 1) \cdot 276 - 414 = 9246$. \hfill $\Box$

\bigskip

For a more detailed proof of the above bounds, replace in \cite[Corollary~3.6]{AW11} the 42-vertex planar hypohamiltonian graph with a 36-vertex planar almost hypohamiltonian graph. Observe that at no point the hamiltonicity of $G$ minus the deleted vertex is used, so hypohamiltonian graphs and almost hypohamiltonian graphs with a cubic exceptional vertex are equally useful in this context.

%--------------------------------------------------------------------------------------------------

\acknowledgements
\label{sec:ack}
This work was carried out using the Stevin Supercomputer Infrastructure at Ghent University. %Both authors are supported by a Postdoctoral Fellowship of the Research Foundation Flanders (FWO).
We thank Gunnar Brinkmann for providing us with an independent program for testing almost hypohamiltonicity and Brendan McKay for his advice on \textit{plantri}.

%--------------------------------------------------------------------------------------------------

%\nocite{*}
%\bibliographystyle{abbrvnat}
\bibliographystyle{plain}
% use the following instead if you encounter problems
%\bibliographystyle{alpha}
%\bibliography{sample-dmtcs}
\bibliography{references}
\label{sec:biblio}

%--------------------------------------------------------------------------------------------------

\end{document}